\documentclass[a4paper]{article}
\usepackage[utf8]{inputenc}
\usepackage[english]{babel}
\usepackage{amsmath}
\usepackage{amsthm}
\usepackage{amssymb}
\usepackage{bbm}	%% used for \mathbbm
\usepackage[numbers]{natbib}	%% bibliography
\bibliographystyle{abbrvnat}
\usepackage{graphicx, color, soul} %% soul for highlight
\usepackage[title]{appendix}
\usepackage[paperwidth=19.5cm,paperheight=25cm]{geometry}
\usepackage{url}
\usepackage{blkarray} %matrix with labels
\usepackage[colorlinks=true,linkcolor=blue,citecolor=blue]{hyperref}
\usepackage[capitalize,nameinlink]{cleveref}

\usepackage{authblk} %% authors and affiliation

\usepackage[shortlabels]{enumitem}	%% change style of lists as "[(i)]"
\setlist{labelsep=.25in,leftmargin=*,labelindent=1cm,topsep=2pt,noitemsep}	%% adjust lists
\setlist[enumerate]{label=(\roman*)}

\numberwithin{equation}{section}

\theoremstyle{plain}
\newtheorem{theorem}{Theorem}[section]
\newtheorem{corollary}[theorem]{Corollary}
\newtheorem{proposition}[theorem]{Proposition}

\newtheorem*{theorem*}{Theorem}
\theoremstyle{remark}
\newtheorem*{lemma*}{Lemma}
\newtheorem{remark}[theorem]{Remark}
\theoremstyle{definition}
\newtheorem{example}[theorem]{Example}

\newcommand{\Polya}{P\'{o}lya }
\newcommand{\limN}{\lim_{n\rightarrow\infty}}

\newcommand{\sumN}{\sum_{i=1}^n}

 %Bernoulli distribution
 %Uniform distribution
 %Normal distribution
\newcommand{\Be}{\textnormal{Beta}} %Beta distribution
 %Gamma distribution
\newcommand{\Dir}{\textnormal{Dir}} %Dirichlet distribution
\newcommand{\DP}{\textnormal{DP}} %Dirichlet process
 %Variance

	%% independence symbol

\usepackage{geometry}
\geometry{verbose,tmargin=2cm,bmargin=2cm,lmargin=1.5cm,rmargin=1.5cm}

\title{Characterization of exchangeable measure-valued P\'{o}lya urn sequences}
\author[1,2]{ Hristo Sariev\thanks{h.sariev@math.bas.bg}}
\author[2,1]{Mladen Savov\thanks{msavov@fmi.uni-sofia.bg}}
\affil[1]{\normalsize Institute of Mathematics and Informatics, Bulgarian Academy of Sciences, 8 Acad. Georgi Bonchev Str., Sofia 1113, Bulgaria}
\affil[2]{\normalsize Faculty of Mathematics and Informatics, Sofia University "St. Kliment Ohridski", 5 James Bourchier Blvd, Sofia 1164, Bulgaria}

\date{}

\begin{document}

\maketitle
\begin{abstract}
Measure-valued \Polya urn sequences (MVPS) are a generalization of the observation processes generated by $k$-color \Polya urn models, where the space of colors $\mathbb{X}$ is a complete separable metric space and the urn composition is a finite measure on $\mathbb{X}$, in which case reinforcement reduces to a summation of measures. In this paper, we prove a representation theorem for the reinforcement measures $R$ of all exchangeable MVPSs, which leads to a characterization result for their directing random measures $\tilde{P}$. In particular, when $\mathbb{X}$ is countable or $R$ is dominated by the initial distribution $\nu$, then any exchangeable MVPS is a Dirichlet process mixture model over a family of probability distributions with disjoint supports. Furthermore, for all exchangeable MVPSs, the predictive distributions converge on a set of probability one in total variation to $\tilde{P}$. Importantly, we do not restrict our analysis to balanced MVPSs, in the terminology of $k$-color urns, but rather show that the only non-balanced exchangeable MVPSs are sequences of i.i.d. random variables.
\end{abstract}

\noindent{\bf Keywords:}
Exchangeability, Reinforced processes, \Polya sequences, Directing random measures, Urn models

\noindent{\bf MSC2020 Classification:} 60G09, 60G25, 60G57, 62G99

\section{Introduction}

The classical two-color \Polya urn model, which describes the evolution of an urn reinforced with one ball of the observed color, has a fundamental role in the predictive construction of prior distributions for Bayesian inference \cite{fortini2012petrone}. Suppose we have an urn that initially contains $w_0,w_1>0$ balls of colors $0$ and $1$, respectively. Let us denote by $X_n\in\{0,1\}$ the color of the sampled ball at step $n$. Then $X_1=1$ with probability $\frac{w_1}{w_0+w_1}$ and, for every $n=1,2,\ldots$,
\begin{equation}\label{intro:predictive:polya_urn}
\mathbb{P}(X_{n+1}=1|X_1,\ldots,X_n)=\frac{w_1+\sumN X_i}{w_0+w_1+n}\qquad\mbox{a.s.},
\end{equation}
which is the proportion of $1$-balls in the urn at time $n$. It is well-known that the process $(X_n)_{n\geq1}$ is exchangeable, that is, its law is invariant under finite permutations of the indices, and satisfies, as $n\rightarrow\infty$,
\[\frac{1}{n}\sumN X_i\overset{a.s.}{\longrightarrow}\tilde{p},\]
where $\tilde{p}$ has a Beta distribution with parameters $(w_0,w_1)$, see, e.g., \citep[][p.8]{pemantle2007}; therefore, by de Finetti representation theorem for exchangeable sequences \citep[][Theorem 3.1]{aldous1985}, given $\tilde{p}$, the $X_n$ are conditionally independent and identically distributed with probability of "success" $\tilde{p}$. The converse is also true -- any exchangeable sequence of Bernoulli random variables with a Beta prior distribution has the predictive structure \eqref{intro:predictive:polya_urn}.

There has been much interest in generalizing the construction \eqref{intro:predictive:polya_urn} to obtain greater model flexibility and, at the same time, retain tractability of the process dynamics, see \cite{fortini2012petrone,mahmoud2008,pemantle2007} and references therein. Various extensions of the \Polya urn scheme consider time-dependent, randomized, or generalized reinforcement mechanisms where colors other than those observed are added to the urn. More recently, \cite{thacker2022,janson2019,mailler2017} have proposed an extended class of \textit{measure-valued \Polya urn processes} that are very general in nature, yet retain characteristic features of urn processes and contain, as a special case, most $k$-color urn models. The idea is to consider the urn composition as a finite measure $\mu$ on the space of colors $\mathbb{X}$, in the sense that, for any measurable set $B\subseteq\mathbb{X}$, the quantity $\mu(B)$ records the total \textit{mass} of balls in the urn whose colors lie in $B$. Reinforcement is then reduced to a summation of measures, so that the updated urn composition, given that a ball with color $x$ has been observed, becomes $\mu+R_x$, where $R$ is a (random) transition kernel on $\mathbb{X}$, i.e. a map $x\mapsto R_x$ from $\mathbb{X}$ to the space of (random) measures on $\mathbb{X}$. Assuming $\mu+R_x$ as the new urn composition, the next draw proceeds in the same way as before, independent of all previous draws. Thus, a sequence of finite measures $(\mu_n)_{n\geq0}$ is a \textit{measure-valued \Polya urn process} (MVPP) if it is a Markov process with the aforementioned additive structure for a given finite reinforcement kernel $R$. In this case, Theorem 1 in \cite{fortini2021} implies the existence of a companion observation process $(X_n)_{n\geq 1}$ such that $X_1\sim\frac{\mu_0}{\mu_0(\mathbb{X})}=:\mu_0'$ and, for each $n=1,2,\ldots$,
\[\mathbb{P}(X_{n+1}\in\cdot\mid \mu_0,X_1,\mu_1,\ldots,X_n,\mu_n)=\frac{\mu_n(\cdot)}{\mu_n(\mathbb{X})}=:\mu_n'(\cdot)\qquad\mbox{a.s.}\]
In particular, when the reinforcement $R$ is non-random, the above becomes
\begin{equation}\label{intro:predictive:mvps}
\mathbb{P}(X_{n+1}\in\cdot\mid X_1,\ldots,X_n)=\frac{\mu_0(\cdot)+\sumN R_{X_i}(\cdot)}{\mu_0(\mathbb{X})+\sumN R_{X_i}(\mathbb{X})}\qquad\mbox{a.s.},
\end{equation}
which corresponds to the normalized urn composition at time $n$ in the urn analogy. We call such a sequence $(X_n)_{n\geq1}$ a \textit{measure-valued \Polya urn sequence} (MVPS) to distinguish it from the process $(\mu_n)_{n\geq0}$.

Analysis of MVPPs has been centered around two very distinct cases. In \cite{fortini2021,sariev2023}, the authors study the so-called "diagonal" model, where only the observed color is reinforced, i.e. $R_x=w(x)\cdot\delta_x$, with $w(x)>0$, for $x\in\mathbb{X}$, and $\delta_x$ is the unit mass at $x$. It follows under certain conditions that there exists a random probability measure $\tilde{P}$ on $\mathbb{X}$ such that, as $n\rightarrow\infty$, $(i)$ the normalized urn composition $\mu_n'$ converges almost surely (a.s.) in total variation to $\tilde{P}$; and $(ii)$ $\tilde{P}$ is concentrated on a subset of so-called "dominant" colors. On the other hand, \cite{thacker2022,mailler2017,mailler2020} consider MVPPs for which there exists an underlying Markov chain with kernel $R$ that satisfies some "irreducibility"-type conditions (see \citep[][Section 1.2]{mailler2020} for more details). Then, under additional assumptions, they prove that $\mu_n'$ converges a.s. weakly to a (deterministic) probability measure on $\mathbb{X}$.

In this work, we consider MVPPs that generate an exchangeable observation sequence $(X_n)_{n\geq1}$ via \eqref{intro:predictive:mvps}. Standard results in exchangeability theory then guarantee that the predictive distributions of $(X_n)_{n\geq1}$ converge a.s. weakly to a random probability measure $\tilde{P}$ on $\mathbb{X}$, called the \textit{directing random measure}, whose distribution determines the law of the exchangeable process, see, e.g., \citep[][Section 3]{aldous1985}. Therefore, the only exchangeable "irreducible" MVPSs are sequences of i.i.d. random variables. Our goal is to characterize all $R$ for which $(X_n)_{n\geq1}$ is exchangeable, and in the process prove some new facts about $\tilde{P}$ and hence about the distribution of $(X_n)_{n\geq1}$. This will help us assess the limitations of the whole class of exchangeable MVPSs when used as models for Bayesian analysis. Important to our understanding are the \textit{kernel based Dirichlet sequences} studied by Berti et al. \cite{berti2023}, which are exchangeable MVPSs whose reinforcement $R$ is a regular version of the conditional distribution for $\mu_0'$ given some sub-$\sigma$-algebra $\mathcal{G}$,
\begin{equation}\label{intro:reinforcement:kds}
R_x(\cdot)=\mu_0'(\cdot\mid\mathcal{G})(x)\qquad\mbox{for }\mu_0'\mbox{-a.e. }x,
\end{equation}
see Section \ref{section:model} for a rigorous definition. This assumption greatly simplifies the subsequent analysis and allows \cite{berti2023} to obtain a complete characterization of the directing random measure of any kernel based Dirichlet sequence. In \citep[][p.18]{berti2023}, the authors tentatively raise the conjecture that \eqref{intro:reinforcement:kds} holds true for every exchangeable MVPS such that $R_x(\mathbb{X})=1$, not excluding the possibility of counterexamples. Our main theorem states that a normalized version of condition \eqref{intro:reinforcement:kds} is indeed true for every member of the class of exchangeable MVPSs, regardless of whether $R_x(\mathbb{X})=1$. That such a representation exists is not obvious, and its proof requires the use of techniques that go beyond those typical of the area and involve intensive use of abstract measure-valued objects. We also give additional results that do not make use of \eqref{intro:reinforcement:kds}. In particular, we prove that when $(X_n)_{n\geq1}$ is not i.i.d., then $R_x(\mathbb{X})$ is constant for almost every $x$, which implies that a "diagonal" MVPS will be exchangeable only if $w(x)$ is constant, see also Example \ref{example:model:polya_sequence}. In addition, we show that, for fixed $x$, the reinforcement $R_x$ is either absolutely continuous or mutually singular with respect to $\nu$. On the other hand, using a refined version of \eqref{intro:reinforcement:kds}, we provide a complete description of all possible exchangeable $k$-color urn models with positive time-homogeneous reinforcement, which to our knowledge has not been done before.

The rest of the paper is structured as follows. In Section \ref{section:model}, we provide notation, state some general facts about exchangeable sequences, and formally define our model. Results and examples are given in Section \ref{section:results}, with proofs postponed to Section \ref{section:proofs}. We state our representation theorem for general $R_x$ in Section \ref{section:results:general}, while in Section \ref{section:results:abs_cont} we study the case when $R_x$ is dominated by $\mu_0'$, which includes $k$-color urns, and in Section \ref{section:results:singular} the case when $R_x$ and $\mu_0'$ are mutually singular. A final section concludes the paper.
 
\section{The model}\label{section:model}

\subsection{Preliminaries}

Let $(\Omega,\mathcal{H},\mathbb{P})$ be a probability space, $\mathbb{X}$ a complete separable metric space, and $\mathcal{X}$ the associated Borel $\sigma$-algebra on $\mathbb{X}$. Standard results imply that $\mathcal{X}$ is countably generated. A \textit{transition kernel} on $\mathbb{X}$ is a function $R:\mathbb{X}\times\mathcal{X}\rightarrow\bar{\mathbb{R}}_+$ that satisfies $(i)$ the map $x\mapsto R(x,B)\equiv R_x(B)$ is $\mathcal{X}$-measurable, for all $B\in\mathcal{X}$; and $(ii)$ $B\mapsto R_x(B)$ is a measure on $\mathbb{X}$, for all $x\in\mathbb{X}$. Equivalently, $R$ can be represented as a measurable function from $\mathbb{X}$ to the space of measures on $\mathbb{X}$. In addition, a transition kernel $R$ is said to be \textit{finite} if $R_x(\mathbb{X})<\infty$, for all $x\in\mathbb{X}$, \textit{non null} if $R_x(\mathbb{X})>0$, for all $x\in\mathbb{X}$, and is called a \textit{probability kernel} if $R_x(\mathbb{X})=1$, for all $x\in\mathbb{X}$. A \textit{random (probability) measure} is a transition (probability) kernel $\tilde{P}:\Omega\times\mathcal{X}\rightarrow\bar{\mathbb{R}}_+$ from $\Omega$ to $\mathbb{X}$.

Let $\nu$ be a probability measure on $\mathcal{X}$, $R$ a transition probability kernel on $\mathbb{X}$, and $\mathcal{G}\subseteq\mathcal{X}$ a sub-$\sigma$-algebra on $\mathbb{X}$. Then $R$ is said to be a \textit{regular version of the conditional distribution (r.c.d.) for $\nu$ given $\mathcal{G}$}, which for emphasis we will denote by
\[R_x(\cdot)=\nu(\cdot\mid\mathcal{G})(x)\qquad\mbox{for }\nu\mbox{-a.e. }x,\]
if $(i)$ $x\mapsto R_x(B)$ is $\mathcal{G}$-measurable, for all $B\in\mathcal{X}$; and $(ii)$ $\int_AR_x(B)\nu(dx)=\nu(A\cap B)$, for all $A\in\mathcal{G}$ and $B\in\mathcal{X}$. It follows from the assumptions on $(\mathbb{X},\mathcal{X})$ that a r.c.d. for $\nu$ under $\mathcal{G}$ exists and is unique up to a $\nu$-null set.

Let $X$ be an $\mathbb{X}$-valued random variable with marginal distribution $X\sim\mathbb{P}_X$. Depending on the context, we would work with conditional distributions of the type $\mathbb{P}(\cdot\mid X)$ or $\mathbb{P}(\cdot\mid X=x)$, which are related by
\[\int_{\{X\in A\}}\mathbb{P}(B|X)(\omega)\mathbb{P}(d\omega)=\int_A\mathbb{P}(B|X=x)\mathbb{P}_X(dx),\]
for all $A\in\mathcal{X}$ and $B\in\mathcal{H}$.

Recall that an $\mathbb{X}$-valued sequence of random variables $(X_n)_{n\geq1}$ is exchangeable if
\[(X_1,\ldots,X_n)\overset{d}{=}(X_{\sigma(1)},\ldots,X_{\sigma(n)}),\]
for every $n\geq2$ and all permutations $\sigma$ of $\{1,\ldots,n\}$. In that case (see \citep[][Section 3]{aldous1985}), there exists a random probability measure $\tilde{P}$ on $\mathbb{X}$, called the \textit{directing random measure} of the sequence, such that, given $\tilde{P}$, the random variables $X_1,X_2,\ldots$ are conditionally independent and identically distributed (i.i.d.) with marginal distribution $\tilde{P}$,
\begin{eqnarray}
X_n\mid\tilde{P}& \overset{i.i.d.}{\sim} & \tilde{P} \nonumber\\
\tilde{P}& \sim & Q \nonumber
\end{eqnarray}
where $Q$ is the (nonparametric) \textit{prior} distribution of $\tilde{P}$. Moreover, for every $A\in\mathcal{X}$,
\begin{equation}\label{model:exchangeable:convergence}
\mathbb{P}(X_{n+1}\in A|X_1,\ldots,X_n)\overset{a.s.}{\longrightarrow}\tilde{P}(A).
\end{equation}

Since we take a predictive approach to model building, the following result from \cite{fortini2000}, which provides necessary and sufficient conditions for the system of predictive distributions of any stochastic process to be consistent with exchangeability, becomes our starting point of analysis.

\begin{theorem}[Theorem 3.1, Proposition 3.2 in \cite{fortini2000}]\label{model:predictive_characterization}
A sequence of random variables $(X_n)_{n\geq1}$ is exchangeable if and only if, for each $n=0,1,2,\ldots$ and every $A,B\in\mathcal{X}$,
\begin{equation}\label{model:exchangeable:predictive}
\mathbb{P}\bigl(X_{n+1}\in A,X_{n+2}\in B|X_1,\ldots,X_n\bigr)=\mathbb{P}\bigl(X_{n+1}\in B,X_{n+2}\in A|X_1,\ldots,X_n\bigr)\quad\mbox{a.s.},
\end{equation}
and
\begin{equation}\label{model:exchangeable:predictive_perm}
\mathbb{P}(X_{n+1}\in A|X_1=x_1,\ldots,X_n=x_n)=\mathbb{P}(X_{n+1}\in A|X_1=x_{\sigma(1)},\ldots,X_n=x_{\sigma(n)}),
\end{equation}
for all permutations $\sigma$ of $\{1,\ldots,n\}$ and almost every $(x_1,\ldots,x_n)\in\mathbb{X}^n$ with respect to the marginal distribution $\mathbb{P}_{(X_1,\ldots,X_n)}$ of $(X_1,\ldots,X_n)$ on $\mathcal{X}^n$, where the case $n=0$ should be understood as an unconditional statement.
\end{theorem}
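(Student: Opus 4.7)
The plan is to prove necessity and sufficiency separately. The necessity direction is essentially a direct consequence of the definition: if $(X_n)_{n\geq1}$ is exchangeable, then every permutation of an initial segment preserves the joint distribution; $(X_1,X_2)\sim(X_2,X_1)$ is just the swap of the first two coordinates, condition \eqref{model:exchangeable:predictive} follows by applying the transposition exchanging indices $n+1$ and $n+2$ inside the conditional expectation given $(X_1,\ldots,X_n)$ and invoking uniqueness of regular conditional distributions, and condition \eqref{model:exchangeable:predictive_perm} follows because the joint law of $(X_1,\ldots,X_n,X_{n+1})$ is invariant under permutations fixing the last coordinate.

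For the converse, I would prove by induction on $n\geq 2$ that $(X_1,\ldots,X_n)$ is exchangeable. The base case $n=2$ is immediate from the hypothesis. For the inductive step, since the symmetric group $S_{n+1}$ is generated by the adjacent transpositions $\tau_i=(i,i+1)$ for $i=1,\ldots,n$, it suffices to check that each such $\tau_i$ preserves the joint law of $(X_1,\ldots,X_{n+1})$. Two cases arise. For $i\leq n-1$ the swap acts only on the first $n$ coordinates, and I would use \eqref{model:exchangeable:predictive_perm} to argue that the regular conditional distribution of $X_{n+1}$ given $(X_1,\ldots,X_n)$ is a symmetric function of its arguments; writing the joint probability as an iterated expectation then lets me apply the inductive hypothesis on $(X_1,\ldots,X_n)$ to pull the swap through. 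The remaining swap $\tau_n$ is handled directly by \eqref{model:exchangeable:predictive} with $n-1$ in place of $n$, which says precisely that the conditional law of $(X_n,X_{n+1})$ given $(X_1,\ldots,X_{n-1})$ is invariant under swapping, and integrating against $(X_1,\ldots,X_{n-1})$ yields invariance of the full joint law.

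The main subtlety I expect is the measure-theoretic handling of condition \eqref{model:exchangeable:predictive_perm}: it asserts permutation invariance only for almost every $(x_1,\ldots,x_n)$ under the joint distribution of $(X_1,\ldots,X_n)$, whereas the induction wants it to hold pointwise so that one may freely substitute a permuted vector inside the conditional expectation. Since $\mathbb{X}$ is complete separable metric, $\mathbb{X}^n$ is standard Borel and a regular version $\mathbb{P}(X_{n+1}\in\cdot\mid X_1=x_1,\ldots,X_n=x_n)$ exists; by averaging this regular version over the $n!$ permutations of its arguments one obtains a globally symmetric kernel that still serves as a regular conditional distribution. Once this is secured, the remaining inductive bookkeeping is routine, and the theorem follows.
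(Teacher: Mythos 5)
The paper does not prove this statement at all --- it is imported verbatim as Theorem 3.1 of Fortini, Ladelli and Regazzini \cite{fortini2000} and used as a black box --- so there is no in-paper argument to compare yours against; I can only assess your proof on its own terms, and it is correct. The necessity direction is indeed immediate. For sufficiency, the induction over adjacent transpositions is sound: the set of permutations preserving the law of $(X_1,\ldots,X_{n+1})$ is closed under composition and contains the identity, so it suffices to check the generators $\tau_1,\ldots,\tau_n$; the swap $\tau_n=(n,n+1)$ follows by integrating the symmetric two-step kernel from \eqref{model:exchangeable:predictive} at level $n-1$ against the law of $(X_1,\ldots,X_{n-1})$ (with the unconditional hypothesis $(X_1,X_2)\sim(X_2,X_1)$ covering the base case $n=1$); and the swaps $\tau_i$, $i\leq n-1$, reduce to the inductive hypothesis once the one-step predictive kernel is replaced by a globally symmetric version. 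You correctly identified the only real subtlety --- that \eqref{model:exchangeable:predictive_perm} holds only almost everywhere --- and your fix is the right one: averaging the regular conditional distribution over the $n!$ permutations produces a kernel that is everywhere symmetric and, because each permuted copy agrees a.e.\ with the original by \eqref{model:exchangeable:predictive_perm}, is still a version of the conditional law of $X_{n+1}$ given $(X_1,\ldots,X_n)$; existence of the regular version is guaranteed since $\mathbb{X}$ is Polish. With that in place the bookkeeping you describe goes through, so your proposal constitutes a complete and self-contained proof of the cited theorem.
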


\subsection{Exchangeable MVPS}

We will call any sequence $(X_n)_{n\geq1}$ of $\mathbb{X}$-valued random variables on $(\Omega,\mathcal{H},\mathbb{P})$ a \textit{measure-valued \Polya urn sequence} with parameters $\theta$, $\nu$ and $R$, denoted MVPS$(\theta,\nu,R)$, if $X_1\sim\nu$ and, for each $n=1,2,\ldots$,
\begin{equation}\label{model:predictive}
\mathbb{P}(X_{n+1}\in\cdot\mid X_1,\ldots,X_n)=\frac{\theta\nu(\cdot)+\sumN R_{X_i}(\cdot)}{\theta+\sumN R_{X_i}(\mathbb{X})}\qquad\mbox{a.s.},
\end{equation}
where $R$ is a non null\footnote{From an application point of view, it may be interesting to see whether exchangeability supports $R_x(\mathbb{X})=0$ for all $x\in Z$ in some $Z\in\mathcal{X}$ such that $0<\nu(Z)<1$, and to characterize $R_x$ for $x\in Z^c$. Importantly, by slightly modifying the proof of Theorem \ref{result:general:balance}, we can show in the non-i.i.d. case that $R_x(\mathbb{X})$ is constant for $\nu$-a.e. $x\in Z^c$. See Remark \ref{result:general:balance_remark} for more information.} finite transition kernel on $\mathbb{X}$, called the \textit{replacement/reinforcement kernel} of the process, $\nu$ is a probability measure on $\mathbb{X}$, known as the \textit{base measure}, and $\theta>0$ is a positive constant. We will say that the MVPS is \textit{balanced} if there is, in addition, some $m>0$ such that $R_x(\mathbb{X})=m$ for $\nu$-a.e. $x$, which in the urn analogy means that we add the same total number of balls each time. Moreover, unlike \eqref{intro:predictive:mvps}, we decompose the initial urn composition into two parameters, $\theta$ and $\nu$, each one having a separate effect on the process (see, e.g., the construction in \eqref{example:model:polya_sequence:limit}).

Consider an exchangeable MVPS$(\theta,\nu,R)$ with directing random measure $\tilde{P}$. It follows from \eqref{model:exchangeable:convergence} and \eqref{model:predictive} that the reinforcement $R$ directly influences the form of $\tilde{P}$. On the other hand, by Theorem \ref{model:predictive_characterization}, $R$ itself must satisfy certain conditions that make it admissible under exchangeability. Note, however, that equation \eqref{model:exchangeable:predictive_perm} is always true for MVPSs. Hence, information about $R$ can only be retrieved from the invariance property of the two-step-ahead predictive distributions in \eqref{model:exchangeable:predictive}. In fact, \citep[Theorem 7]{berti2023} show in their study of kernel based Dirichlet sequences that, for a balanced MVPS to be exchangeable, it is sufficient that equation \eqref{model:exchangeable:predictive} holds for $n=0,1$. Remarkably, when the MVPS is unbalanced, then \eqref{model:exchangeable:predictive} for $n=2$ is also needed (see the proof of Theorem \ref{result:general:balance}). Using this information, we show that, when normalized, $R$ must be a r.c.d. for $\nu$ given some sub-$\sigma$-algebra $\mathcal{G}$ of $\mathcal{X}$,
\begin{equation}\label{model:representation}
\frac{R_x(\cdot)}{R_x(\mathbb{X})}=\nu(\cdot\mid\mathcal{G})(x)\qquad\mbox{for }\nu\mbox{-a.e. }x,
\end{equation}
and in the process we prove that exchangeable MVPSs are necessarily balanced unless i.i.d. It turns out that the representation \eqref{model:representation} has major consequences for the shape and distribution of $\tilde{P}$, including the fact that the convergence in \eqref{model:exchangeable:convergence} is in total variation. We give one important example of an exchangeable MVPS next.

\begin{example}[\Polya sequence]\label{example:model:polya_sequence}
Let $(X_n)_{n\geq1}$ be an MVPS with reinforcement kernel
\begin{equation}\label{example:model:polya_sequence:reinforcement}
R_x=\delta_x\qquad\mbox{for }x\in\mathbb{X},
\end{equation}
i.e., we only reinforce the observed color with one additional ball. This process, also known as a \textit{\Polya sequence}, was first studied by Blackwell and MacQueen \cite{blackwell1973} as an extension of the \Polya urn scheme \eqref{intro:predictive:polya_urn} to general Polish spaces $\mathbb{X}$. In \cite{blackwell1973}, the authors prove that $(X_n)_{n\geq1}$ is exchangeable and its directing random measure $\tilde{P}$ has a \textit{Dirichlet process} distribution with parameters $(\theta,\nu)$, denoted $\DP(\theta,\nu)$. We recall that $\tilde{P}\sim\DP(\theta,\nu)$ if, for every finite partition $B_1,\ldots,B_n\in\mathcal{X}$ of $\mathbb{X}$, the random vector $(\tilde{P}(B_1),\ldots,\tilde{P}(B_n))$ has a Dirichlet distribution with parameters $(\theta\nu(B_1),\ldots,\theta\nu(B_n))$,
\[\bigl(\tilde{P}(B_1),\ldots,\tilde{P}(B_n)\bigr)\sim\Dir\bigl(\theta\nu(B_1),\ldots,\theta\nu(B_n)\bigr).\]
An equivalent statement (see, e.g., \cite[][p.112]{lijoi2010}) is that $\tilde{P}$ is equal in law to
\begin{equation}\label{example:model:polya_sequence:limit}
\tilde{P}(\cdot)\overset{w}{=}\sum_{j=1}^\infty V_j\delta_{Z_j}(\cdot),
\end{equation}
where $V_j=W_j\prod_{i=1}^{j-1}(1-W_i)$ and $(W_j)_{j\geq1}$ are i.i.d. with $W_1\sim\Be(1,\theta)$, and $(Z_j)_{j\geq1}$ are i.i.d.$(\nu)$, independent of $(V_j)_{j\geq1}$.

In this case, the reinforcement \eqref{example:model:polya_sequence:reinforcement} has the anticipated structure \eqref{model:representation}, since
\[R_x(\cdot)=\nu(\cdot\mid\mathcal{X})(x)\qquad\mbox{for }x\in\mathbb{X}.\]
\end{example}

\section{Main results}\label{section:results}

\subsection{General case}\label{section:results:general}

We begin this section with a simple characterization result for i.i.d. MVPSs.

\begin{proposition}\label{result:i.i.d}
An MVPS$(\theta,\nu,R)$ is i.i.d. if and only if, for $\nu$-a.e. $x$,
\begin{equation}\label{result:i.i.d.:equation:form}
\frac{R_x(\cdot)}{R_x(\mathbb{X})}=\nu(\cdot).
\end{equation}
\end{proposition}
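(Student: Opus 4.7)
The plan is to verify the equivalence in both directions by direct manipulation of the one-step predictive distribution given in \eqref{model:predictive}, proceeding by induction for one direction and by evaluation at $n=1$ for the other.

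For the necessity (``only if''), I would assume the MVPS is i.i.d., so in particular $X_2\mid X_1\sim\nu$. Combining this with the explicit form
\[
\mathbb{P}(X_2\in\cdot\mid X_1=x)=\frac{\theta\nu(\cdot)+R_x(\cdot)}{\theta+R_x(\mathbb{X})}
\]
valid for $\nu$-a.e.\ $x$, I would equate the two expressions and rearrange to obtain $R_x(\cdot)=R_x(\mathbb{X})\,\nu(\cdot)$ as an identity of finite measures on $\mathcal{X}$. Since $\mathbb{X}$ is Polish, $\mathcal{X}$ is countably generated, so this equality, which a priori only needs to hold evaluated on each fixed Borel set for $\nu$-a.e.\ $x$ (with the exceptional null set depending on the set), upgrades via a countable generating $\pi$-system to a common $\nu$-null exceptional set for all sets simultaneously. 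Then $R_x(\mathbb{X})>0$ (otherwise $R_x\equiv 0$ and the claimed identity holds trivially) yields \eqref{result:i.i.d.:equation:form}.

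For the sufficiency (``if''), I would set $N:=\{x:R_x(\cdot)/R_x(\mathbb{X})\neq \nu(\cdot)\}\in\mathcal{X}$ (again using countable generation to ensure measurability of $N$) and assume $\nu(N)=0$. The goal is to show inductively that $X_1,\ldots,X_n$ are i.i.d.\ $\nu$ for every $n$. The base case is immediate since $X_1\sim\nu$. For the inductive step, if $X_1,\ldots,X_n$ are i.i.d.\ $\nu$, then $\mathbb{P}(X_i\in N\text{ for some }i\leq n)=0$, so almost surely $R_{X_i}(\cdot)=R_{X_i}(\mathbb{X})\,\nu(\cdot)$ for each $i\leq n$. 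Substituting into \eqref{model:predictive}, both numerator and denominator factor and cancel, leaving
\[
\mathbb{P}(X_{n+1}\in\cdot\mid X_1,\ldots,X_n)=\nu(\cdot)\qquad\text{a.s.},
\]
which shows $X_{n+1}$ is independent of $(X_1,\ldots,X_n)$ and distributed as $\nu$, completing the induction.

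The statement is essentially a direct reading of the defining equation, so there is no serious obstacle; the only mild technical care needed is to make the ``$\nu$-a.e.'' uniform over all Borel sets $B\in\mathcal{X}$ using a countable $\pi$-system generating $\mathcal{X}$, which is available because $\mathbb{X}$ is separable metric. Once this uniformity is secured, both directions follow from elementary algebra on the predictive formula.
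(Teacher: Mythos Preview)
Your proposal is correct and follows essentially the same route as the paper: for necessity, compare the predictive formula at $n=1$ with the i.i.d.\ conditional $\nu$, rearrange, and upgrade to a single $\nu$-null exceptional set via a countable generating system; for sufficiency, substitute $R_{X_i}(\cdot)=R_{X_i}(\mathbb{X})\nu(\cdot)$ into \eqref{model:predictive} and cancel. The only difference is that your ``if'' direction makes the induction explicit (arguing that each $X_i$ avoids the exceptional set because it has law $\nu$), whereas the paper performs the substitution in one line without spelling this out; your version is a touch more careful but not a different argument.
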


The next theorem states that MVPSs that are exchangeable, but not i.i.d. are necessarily balanced. This fact is essential since it implies that the predictive distributions \eqref{model:predictive} are a linear combination of reinforcement kernels and leads via \eqref{model:exchangeable:predictive} to certain identities for $R$.

\begin{theorem}\label{result:general:balance}
An unbalanced exchangeable MVPS is necessarily i.i.d.
\end{theorem}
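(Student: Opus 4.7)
My plan is to use Theorem \ref{model:predictive_characterization}, which reduces exchangeability to the invariance of the conditional joint law of $(X_{n+1},X_{n+2})$ given $X_1,\ldots,X_n$ under swapping the two entries, for every $n\geq 0$. At $n=0$ this is just the symmetry of
\[
\mu_{12}(dx,dy) = \nu(dx)\,\frac{\theta\nu(dy)+R_x(dy)}{c(x)}, \qquad c(x):=\theta+R_x(\mathbb{X}),
\]
on $\mathbb{X}^2$. Writing $w(x):=R_x(\mathbb{X})$ and clearing denominators, $\mu_{12}(dx,dy)=\mu_{12}(dy,dx)$ is equivalent to the measure identity
\[
\frac{\nu(dy)R_y(dx)}{c(y)} \;-\; \frac{\nu(dx)R_x(dy)}{c(x)} \;=\; \theta\,\frac{w(y)-w(x)}{c(x)\,c(y)}\,\nu(dx)\,\nu(dy) \qquad (\star)
\]
on $\mathbb{X}^2$.

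The right-hand side of $(\star)$ is absolutely continuous with respect to $\nu\otimes\nu$, so the singular parts of the left-hand side must cancel. Lebesgue-decomposing $R_x = R_x^a + R_x^s$ relative to $\nu$, this yields the symmetry $\nu(dy)R_y^s(dx)/c(y) = \nu(dx)R_x^s(dy)/c(x)$ for the singular components, while the absolutely continuous parts give, in density form with $k(x,y) := dR_x^a/d\nu(y)$,
\[
c(x)\,k(y,x) - c(y)\,k(x,y) = \theta\bigl(w(y)-w(x)\bigr) \quad (\nu\otimes\nu)\text{-a.e.} \qquad (\star\star)
\]
Equivalently, $(x,y)\mapsto c(y)k(x,y) + \theta w(y)$ is symmetric in $(x,y)$. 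Because $(\star\star)$ alone still admits many configurations, I would then invoke Theorem \ref{model:predictive_characterization} at $n=1$: for $\nu$-a.e.\ $z$ the conditional sequence $(X_n)_{n\geq 2}\mid X_1=z$ is itself an exchangeable MVPS, now with parameters $\theta'=c(z)$, $\nu'=(\theta\nu+R_z)/c(z)$ and the same kernel $R$; running the above derivation for this conditional MVPS yields a $z$-indexed family of identities of the same shape as $(\star)$--$(\star\star)$, in which the roles of $\theta$ and $\nu$ are now played by $c(z)$ and $(\theta\nu+R_z)/c(z)$.

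The main obstacle is then to extract from $(\star\star)$ and its $z$-conditional counterpart the dichotomy: either (i) $w$ is $\nu$-a.e.\ constant --- so the MVPS is balanced, contradicting the standing assumption --- or (ii) $R_x = w(x)\,\nu$ for $\nu$-a.e.\ $x$, in which case Proposition \ref{result:i.i.d} gives that the sequence is i.i.d. I would try to reach this dichotomy by integrating $(\star\star)$ against test functions that depend only on $w$, exploiting the antisymmetry of $w(y)-w(x)$ against the symmetry of $c(y)k(x,y)+\theta w(y)$, and then comparing with the $z$-conditional identities; the goal is that any $z$-dependent term can be made to vanish only in the two degenerate cases above, after which Proposition \ref{result:i.i.d} finishes the proof. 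The interplay between the mass function $w$ and the normalized kernel $R_x/w(x)$ is where the real work lies, since both appear intertwined in every identity produced by exchangeability.
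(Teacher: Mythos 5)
Your setup is sound as far as it goes: the symmetry identity $(\star)$ is exactly the paper's starting point, your $(\star\star)$ is a correct consequence of it (and splitting off the $\nu\otimes\nu$-singular part is a legitimate, arguably cleaner, alternative to the paper's device of matching coefficients of powers of $\theta$), and you correctly recognize both that the one-step symmetry alone cannot suffice and that the exchangeability of $(X_2,X_3)$ conditionally on $X_1=z$ must be brought in. The target dichotomy --- $w$ is $\nu$-a.e.\ constant, or $R_x=w(x)\nu$ for $\nu$-a.e.\ $x$ with Proposition \ref{result:i.i.d} closing the i.i.d.\ branch --- is also the right one.

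The problem is that the argument stops exactly where the theorem lives. Everything after ``The main obstacle is then to extract\ldots'' is a statement of intent (``I would try'', ``the goal is'', ``where the real work lies''), not a derivation, and the sketched route is too vague to be credited. Two concrete gaps. First, passing from $(\star\star)$ and its $z$-conditional analogue to the dichotomy is the entire content of the theorem: in the paper's proof this requires partitioning $\mathbb{X}^2$ by the level sets of $w$, i.e.\ $H=\{(x,y):w(x)=w(y)\}$, deducing from the one-step identity that the normalized kernel $R_x/w(x)$ equals $\nu$ off $H_x$, then --- after several cancellations that themselves reuse the one-step identities --- deducing from the two-step identity that $R_x/w(x)=\nu$ on $H_x$ whenever $\nu(H_x^c)>0$, and finally a separate zero--one argument showing that $\nu(\{x:\nu(H_x^c)>0\})$ is either $0$ or $1$. ``Integrating against test functions depending only on $w$'' does not obviously reproduce this restriction-to-$H$ structure, and no candidate computation is exhibited. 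Second, your $(\star\star)$ constrains only the absolutely continuous density $k$, whereas $w(x)=R_x(\mathbb{X})$ also carries the mass of the singular part $R_x^s$; your singular-part identity is a symmetry statement that by itself says nothing about the function $x\mapsto R_x^s(\mathbb{X})$, so an unbalanced kernel could a priori hide its non-constancy entirely in $R^s$ (note that nontrivial singular parts do occur, e.g.\ $R_x=\delta_x$ with diffuse $\nu$). As written, this is a plausible plan whose decisive steps are unexecuted, not a proof.
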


\begin{remark}\label{result:general:balance:remark}
If an MVPS$(\theta,\nu,R)$ is balanced, then $R_x(\mathbb{X})=m$ for $\nu$-a.e. $x$ and some constant $m>0$. It then follows from the particular form of the predictive distributions \eqref{model:predictive} that we can equivalently reformulate the process as an MVPS with parameters $(\tilde{\theta},\nu,\tilde{R})$, where $\tilde{\theta}=\frac{\theta}{m}$ and $\tilde{R}_x=\frac{R_x}{m}$ with $\tilde{R}_x(\mathbb{X})=1$, for $\nu$-a.e. $x$. On the other hand, it is not hard to see that any i.i.d. MVPS$(\theta,\nu,R)$ with unbalanced $R$ is also an i.i.d. MVPS with parameters $(\theta,\nu,\nu)$. Thus, when we consider i.i.d. MVPSs below, we will implicitly assume that they are balanced. The above remarks are combined in the next corollary.
\end{remark}

\begin{corollary}\label{result:general:balance_probability}
An exchangeable MVPS$(\theta,\nu,R)$ is also an exchangeable MVPS$(\tilde{\theta},\nu,\tilde{R})$ for some probability kernel $\tilde{R}$ and constant $\tilde{\theta}$.
\end{corollary}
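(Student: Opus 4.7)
The corollary is essentially a packaging of Proposition \ref{result:i.i.d} and Theorem \ref{result:general:balance} together with the normalization observation in Remark \ref{result:general:balance:remark}. The plan is to split into two cases according to whether the exchangeable MVPS$(\theta,\nu,R)$ is i.i.d. or not, handle each by an explicit rescaling of $R$, and then verify that the predictive distributions in \eqref{model:predictive} coincide almost surely with those of the rescaled MVPS$(\tilde{\theta},\nu,\tilde{R})$.

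In the non-i.i.d. case, Theorem \ref{result:general:balance} delivers a constant $m>0$ such that $R_x(\mathbb{X})=m$ for $\nu$-a.e.\ $x$, say on a measurable set $A$ with $\nu(A)=1$. I would set $\tilde{\theta}:=\theta/m$ and define $\tilde{R}_x:=R_x/m$ for $x\in A$, extending $\tilde{R}_x$ to an arbitrary fixed probability measure (e.g.\ $\nu$) off $A$; this makes $\tilde{R}$ a probability kernel on $\mathbb{X}$. The crucial observation is that, by exchangeability, every $X_n$ has marginal distribution $\nu$, so $X_n\in A$ almost surely for all $n$, and hence $\tilde{R}_{X_n}=R_{X_n}/m$ and $\tilde{R}_{X_n}(\mathbb{X})=1$ a.s. Substituting into the right-hand side of \eqref{model:predictive}, numerator and denominator are each divided by $m$ and the ratio is unchanged, so $(X_n)_{n\geq1}$ satisfies the predictive recursion of an MVPS$(\tilde{\theta},\nu,\tilde{R})$.

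In the i.i.d.\ case, Proposition \ref{result:i.i.d} gives $R_x/R_x(\mathbb{X})=\nu$ for $\nu$-a.e.\ $x$. Take simply $\tilde{\theta}:=\theta$ and $\tilde{R}_x:=\nu$ for every $x\in\mathbb{X}$, which is trivially a probability kernel. The predictive distributions of an MVPS$(\theta,\nu,\nu)$ reduce to $(\theta\nu+n\nu)/(\theta+n)=\nu$ for every $n$, and the original i.i.d.\ MVPS has predictive distributions equal to $\nu$ as well, so the two sequences have the same finite-dimensional distributions.

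The only genuinely delicate point is the handling of the $\nu$-null set on which $R_x(\mathbb{X})\neq m$ (respectively $R_x/R_x(\mathbb{X})\neq\nu$); the rest is a mechanical rewriting of the ratio in \eqref{model:predictive}. As noted, this is bypassed by invoking the identical marginal distribution of each $X_n$ under exchangeability, which forces all observed values into the set of full $\nu$-measure with probability one and so makes the redefinition of $\tilde{R}$ off that set immaterial to the law of $(X_n)_{n\geq1}$.
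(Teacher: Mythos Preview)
Your proposal is correct and follows essentially the same route as the paper: the corollary is stated there without an explicit proof, being just a repackaging of Remark~\ref{result:general:balance:remark}, which performs the same two-case split (balanced versus i.i.d.) and the same rescaling $\tilde{\theta}=\theta/m$, $\tilde{R}_x=R_x/m$ in the first case and $\tilde{R}_x=\nu$ in the second. Your handling of the $\nu$-null set by redefining $\tilde{R}$ off it and invoking $X_n\sim\nu$ for all $n$ is a careful detail that the paper leaves implicit.
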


We now consider the issue of representing $R$ as in \eqref{model:representation}. For completeness, we first state the converse result, which is found in \cite{berti2023}.

\begin{theorem}[Theorem 7 in \cite{berti2023}]\label{result:balance:conditional_distribution}
Any balanced MVPS$(\theta,\nu,R)$ whose reinforcement kernel $R$ is, up to normalization, a r.c.d. for $\nu$ given some sub-$\sigma$-algebra $\mathcal{G}$ of $\mathcal{X}$, is exchangeable.
\end{theorem}

\begin{remark}
Although Berti et al. \cite{berti2023} focus only on the balanced case, it follows from Theorem \ref{result:general:balance} that no unbalanced MVPS, having replacement kernel as in Theorem \ref{result:balance:conditional_distribution}, can be exchangeable unless $\mathcal{G}=\{\emptyset,\mathbb{X}\}$, in which case the observation sequence is i.i.d.
\end{remark}

Our main result studies the necessity of such a representation under exchangeability. In addition, it states that we may take $\mathcal{G}$ to be \textit{countably generated (c.g.) under $\nu$}, in the sense that there exists $C\in\mathcal{G}$ such that $\nu(C)=1$ and $\mathcal{G}\cap C$ is c.g. In fact (see \cite[p.649]{berti2007} and \cite{blackwell1975}), $\mathcal{G}$ is c.g. under $\nu$ if and only if there exists a regular version $\mu$ of $\nu(\cdot\mid\mathcal{G})$ which is a.e. \textit{proper}, that is, $\mu$ satisfies, for some $F\in\mathcal{G}$ such that $\nu(F)=1$, 
\[\mu_x(A)=\delta_x(A)\qquad\mbox{for all }A\in\mathcal{G}\mbox{ and }x\in F.\]

\begin{theorem}\label{result:general:main}
Let $(X_n)_{n\geq1}$ be an exchangeable MVPS$(\theta,\nu,R)$. Then there exists a sub-$\sigma$-algebra $\mathcal{G}$ of $\mathcal{X}$ such that $R$ is a r.c.d. for $\nu$ under $\mathcal{G}$,
\begin{equation}\label{result:general:equation:form}
\frac{R_x(\cdot)}{R_x(\mathbb{X})}=\nu(\cdot\mid\mathcal{G})(x)\qquad\mbox{for }\nu\mbox{-a.e. }x.
\end{equation}
Moreover, we can find a sub-$\sigma$-algebra $\mathcal{G}$ of $\mathcal{X}$ such that \eqref{result:general:equation:form} holds and $\mathcal{G}$ is c.g. under $\nu$.
\end{theorem}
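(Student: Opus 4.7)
By Corollary \ref{result:general:balance_probability} we may and will assume that $R$ is a probability kernel, making the predictive distributions \eqref{model:predictive} honest convex combinations of $\nu$ and the $R_{X_i}$'s. The plan is to extract two algebraic identities from Theorem \ref{model:predictive_characterization} and then recognize $R$ as a self-adjoint Markov projection on $L^2(\nu)$. Computing both sides of $(X_1,X_2)\sim(X_2,X_1)$ against the one-step predictive yields the $\nu$-reversibility
\[\int_A R_x(B)\,\nu(dx)=\int_B R_x(A)\,\nu(dx),\qquad A,B\in\mathcal{X}.\]
Expanding the two-step-ahead invariance \eqref{model:exchangeable:predictive} at $n=1$ via \eqref{model:predictive} and cancelling the symmetric pieces (using the above identity to dispatch the $\theta\nu$ cross term) produces, for $\nu$-a.e.\ $x_0$, the stronger identity
\[\int_A R_x(B)\,R_{x_0}(dx)=\int_B R_x(A)\,R_{x_0}(dx),\qquad A,B\in\mathcal{X}.\]

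Setting $A=\mathbb{X}$ in the second identity gives $R^2_{x_0}(B)=R_{x_0}(B)$ for $\nu$-a.e.\ $x_0$ and every $B\in\mathcal{X}$, so $R$ is \emph{idempotent}. Combined with the self-adjointness on $L^2(\nu)$ conferred by the first identity and the tautological positivity and normalization $R_x(\mathbb{X})=1$, this exhibits $R$ as a self-adjoint Markov projection. Set
\[\mathcal{G}:=\{A\in\mathcal{X}:R_x(A)=\ind{A}(x)\text{ for $\nu$-a.e.\ }x\}.\]
Because each $R_x$ is a probability measure, the relations $R_x(A)=1$ and $R_x(B)=1$ force $R_x(A\cap B)=1$, etc., which after a short direct check shows that $\mathcal{G}$ is a sub-$\sigma$-algebra of $\mathcal{X}$. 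A classical identification of self-adjoint Markov projections with conditional expectation operators (the range of such a projection being necessarily of the form $L^2(\mathcal{G},\nu)$) then forces $R=\mathbb{E}_\nu[\cdot\mid\mathcal{G}]$ on $L^2(\nu)$, and uniqueness of regular conditional distributions yields the kernel identity \eqref{result:general:equation:form}.

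For the countable-generation claim, the Polish assumption makes $\mathcal{X}$ countably generated by some countable algebra $\mathcal{A}$. By the representation just obtained, each function $x\mapsto R_x(A)$ is $\mathcal{G}$-measurable, and a monotone-class argument against the defining property of $\mathcal{G}$ shows that $\mathcal{G}$ coincides modulo $\nu$-null sets with the countably generated $\sigma$-algebra $\sigma(x\mapsto R_x(A):A\in\mathcal{A})$. Taking the countable union of exceptional null sets in $R_x(A)=\ind{A}(x)$ over $A$ ranging in this generator produces a set $C\in\mathcal{G}$ of full $\nu$-measure on which $\nu(\cdot\mid\mathcal{G})$ is proper, which by the equivalence recalled in the statement is exactly countable generation of $\mathcal{G}$ under $\nu$. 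The main obstacle I foresee is the passage from the operator-level structure of $R$ to the pointwise kernel identity: both the Markov-projection-to-conditional-expectation identification and the properness needed for countable generation require upgrading $L^2$-class equalities to identities on a single $\nu$-conull set of kernel-level validity, which is precisely the place where the countable generation of the Borel $\sigma$-algebra $\mathcal{X}$ is essentially exploited.
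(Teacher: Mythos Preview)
Your proof is correct and is a genuinely different route from the paper's. Both approaches start from the same two algebraic identities (the $\nu$-reversibility of $R$ from $(X_1,X_2)\sim(X_2,X_1)$, and the $R_{x_0}$-reversibility from the two-step invariance at $n=1$), but diverge from there. The paper chains these identities together to obtain the pointwise statement
\[R_y(\cdot)=R_x(\cdot)\quad\text{for }R_x\text{-a.e.\ }y\text{ and }\nu\text{-a.e.\ }x,\]
and then constructs $\mathcal{G}=\{A\in\mathcal{X}:R_x(A)=\delta_x(A)\text{ for all }x\in C\}$ by hand, verifying directly that $E_{t,B}=\{x:R_x(B)<t\}\in\mathcal{G}$ and that $\int_AR_x(B)\,\nu(dx)=\nu(A\cap B)$ for $A\in\mathcal{G}$. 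You instead set $A=\mathbb{X}$ in the second identity to obtain $R^2=R$ and then invoke the Douglas--Ando type characterization of positive self-adjoint idempotent Markov operators as conditional expectations. Your operator-theoretic route is shorter and more conceptual, but it imports a nontrivial black box; the paper's argument is fully elementary and, as a byproduct, yields the pointwise equality $R_y=R_x$ for $R_x$-a.e.\ $y$, which the paper re-uses in the proofs of Theorem~\ref{result:decomposition:main} and Proposition~\ref{result:singular:discrete:atoms}.

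On the countable-generation claim, your argument is in the right spirit but thinner than the paper's. The paper builds an explicit nested sequence $C_n=\{x:R_x(C_{n-1})=1\}$, sets $C^*=\bigcap_nC_n$, and checks directly that $C^*\in\mathcal{G}$ with $\nu(C^*)=1$ and that $R_x(A)=\delta_x(A)$ for every $A\in\mathcal{G}$ and every $x\in C^*$, before invoking the Berti--Rigo/Blackwell equivalence. Your sketch (pass to a countable generator and take the union of exceptional null sets) indeed gives a conull set on which $R_x(A)=\ind{A}(x)$ for all $A$ in the generator, but you should be explicit that this extends to all of $\mathcal{G}$ by a $\pi$-$\lambda$ argument (both $R_x$ and $\delta_x$ are probability measures agreeing on a generating $\pi$-system), and that the conull set can be arranged to lie in $\mathcal{G}$ rather than merely in $\mathcal{X}$; alternatively, simply take $\mathcal{G}_0=\sigma(x\mapsto R_x(A):A\in\mathcal{A})$ as the $\sigma$-algebra in the statement, since it is countably generated outright and agrees with your $\mathcal{G}$ modulo $\nu$-null sets.
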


\begin{remark}
If $(X_n)_{n\geq1}$ is an MVPS with reinforcement $R(\cdot)=\nu(\cdot\mid\mathcal{G})$ and thus exchangeable by Theorem \ref{result:balance:conditional_distribution}, then there exists by Theorem \ref{result:general:main} a sub-$\sigma$-algebra $\mathcal{G}'$ which is c.g. under $\nu$ and $\nu(\cdot\mid\mathcal{G})=\nu(\cdot\mid\mathcal{G}')$.
\end{remark}

A major consequence of Theorems \ref{result:general:balance} and \ref{result:general:main} is that one can now characterize the directing random measure $\tilde{P}$ of any exchangeable MVPS using results developed in \cite{berti2023} under the stronger assumptions that $R$ is balanced and of the form \eqref{result:general:equation:form}. In that case, Berti et al. \citep[][Theorem 10]{berti2023} show that the predictive distributions \eqref{model:predictive} converge almost surely in total variation to $\tilde{P}$. By virtue of Theorems \ref{result:general:balance} and \ref{result:general:main}, the following result, which is based on Theorems 9 and 10 in \citep{berti2023}, is now true for the entire class of exchangeable MVPS.

\begin{theorem}\label{result:general:limit}
Let $(X_n)_{n\geq1}$ be an exchangeable MVPS$(\theta,\nu,R)$ with directing random measure $\tilde{P}$. Then, as $n\rightarrow\infty$,
\[\sup_{A\in\mathcal{X}}\bigl|\mathbb{P}(X_{n+1}\in A|X_1,\ldots,X_n)-\tilde{P}(A)\bigr|\overset{a.s.}{\longrightarrow}0.\]
Moreover, $\tilde{P}$ is equal in law to
\[\tilde{P}(\cdot)\overset{w}{=}\sum_{j=1}^\infty V_j\frac{R_{Z_j}(\cdot)}{R_{Z_j}(\mathbb{X})},\]
where $(V_j)_{j\geq1}$ and $(Z_j)_{j\geq1}$ are as in \eqref{example:model:polya_sequence:limit}, using as parameters $(\frac{\theta}{m},\nu)$, and $m>0$ is some constant such that $m=R_x(\mathbb{X})$ for $\nu$-a.e. $x$.
\end{theorem}

\subsection{Absolutely continuous case}\label{section:results:abs_cont}

In this section, we consider MVPSs such that, for $\nu$-a.e. $x$, the replacement $R_x$ is \textit{absolutely continuous} with respect to the base measure $\nu$, denoted $R_x\ll\nu$. Special cases include $k$-color urn models and MVPSs with discrete $\nu$. The next theorem states that, under the assumption $R_x\ll\nu$, the $\sigma$-algebra $\mathcal{G}$ in \eqref{result:general:equation:form} is c.g. under $\nu$ w.r.t. a countable partition.

\begin{theorem}\label{result:abs_cont:main}
Let $(X_n)_{n\geq1}$ be an exchangeable MVPS$(\theta,\nu,R)$. Then $R_x\ll\nu$ for $\nu$-a.e. $x$ if and only if there exists a countable partition $D_1,D_2,\ldots$ of $\mathbb{X}$ in $\mathcal{X}$ such that
\begin{equation}\label{result:abs_cont:equation:form}
\frac{R_x(\cdot)}{R_x(\mathbb{X})}=\sum_k\nu(\cdot\mid D_k)\cdot\mathbbm{1}_{D_k}(x)\qquad\mbox{for }\nu\mbox{-a.e. }x.
\end{equation}
\end{theorem}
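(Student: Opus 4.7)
The plan is to reduce the problem to Theorem \ref{result:general:main} by analyzing the structure of the $\sigma$-algebra $\mathcal{G}$ it produces under the extra hypothesis $R_x \ll \nu$. First, I would apply Theorem \ref{result:general:main} to obtain a sub-$\sigma$-algebra $\mathcal{G}$ of $\mathcal{X}$, countably generated under $\nu$, such that $R_x(\cdot)/R_x(\mathbb{X}) = \nu(\cdot\mid\mathcal{G})(x)$ for $\nu$-a.e.\ $x$. By the characterization of ``c.g.\ under $\nu$'' recalled after Theorem \ref{result:general:main}, the conditional distribution $\nu(\cdot\mid\mathcal{G})$ is a.e.\ proper: there exists $F\in\mathcal{X}$ with $\nu(F)=1$ such that $\nu(A\mid\mathcal{G})(x)=\delta_x(A)$ for all $A\in\mathcal{G}$ and $x\in F$. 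Fixing a set $C\in\mathcal{G}$ of full $\nu$-measure on which $\mathcal{G}\cap C$ is generated by a countable family, every atom $[x]_{\mathcal{G}}$ with $x\in C$ is itself an element of $\mathcal{G}$, and properness says that the measure $\nu(\cdot\mid\mathcal{G})(x)$ is concentrated on $[x]_{\mathcal{G}}$ for $\nu$-a.e.\ $x$.

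Next, I would bring in absolute continuity. The hypothesis $R_x\ll\nu$ translates into $\nu(\cdot\mid\mathcal{G})(x)\ll\nu$ for $\nu$-a.e.\ $x$. Since $\nu(\cdot\mid\mathcal{G})(x)$ is a probability measure concentrated on $[x]_{\mathcal{G}}$, the atom $[x]_{\mathcal{G}}$ must satisfy $\nu([x]_{\mathcal{G}})>0$ for $x$ in a set of $\nu$-measure one, because a probability measure concentrated on a $\nu$-null set cannot be dominated by $\nu$. Distinct atoms being disjoint and $\nu$ being a probability, at most countably many atoms can carry positive mass; enumerate them as $D_1,D_2,\ldots$, absorb the remaining $\nu$-null piece $\mathbb{X}\setminus\bigcup_k D_k$ into $D_1$ so that $\{D_k\}_{k\geq1}$ becomes a measurable partition of $\mathbb{X}$, and note that $\nu(\bigcup_k D_k)=1$.

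The final step is to identify $\nu(\cdot\mid\mathcal{G})(x)$ on each $D_k$ with the elementary conditional probability $\nu(\cdot\mid D_k)$. Since $D_k\in\mathcal{G}$ is an atom (up to a $\nu$-null set) and $\nu(\cdot\mid\mathcal{G})$ is $\mathcal{G}$-measurable in $x$, for each fixed $B\in\mathcal{X}$ the function $x\mapsto\nu(B\mid\mathcal{G})(x)$ is $\nu$-a.e.\ constant on $D_k$; the defining property $\int_{D_k}\nu(B\mid\mathcal{G})\,d\nu=\nu(B\cap D_k)$ forces this constant to equal $\nu(B\cap D_k)/\nu(D_k)=\nu(B\mid D_k)$. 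Piecing these identifications together across $k$ produces \eqref{result:abs_cont:equation:form} for $\nu$-a.e.\ $x$.

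The main obstacle is the rigorous handling of atoms: one has to make sure that $[x]_{\mathcal{G}}$ is indeed a $\mathcal{G}$-measurable set (available only on the full-measure piece $C$ where $\mathcal{G}$ is genuinely countably generated) so that properness may be applied to it, and that the countable list $\{D_k\}$ really captures every atom of positive $\nu$-measure. The converse is immediate: given the partition $\{D_k\}$, take $\mathcal{G}=\sigma(D_1,D_2,\ldots)$; then the prescribed kernel coincides with $\nu(\cdot\mid\mathcal{G})$, which is trivially dominated by $\nu$, and exchangeability follows from Theorem \ref{result:balance:conditional_distribution}.
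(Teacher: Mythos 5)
Your argument is correct, but it follows a genuinely different route from the paper's. You deduce the theorem from Theorem \ref{result:general:main} by abstract structure theory: take the $\sigma$-algebra $\mathcal{G}$ with $R_x(\cdot)/R_x(\mathbb{X})=\nu(\cdot\mid\mathcal{G})(x)$, use that $\mathcal{G}$ is countably generated under $\nu$ (equivalently, that the conditional distribution is a.e.\ proper) to conclude that $\nu(\cdot\mid\mathcal{G})(x)$ sits on the $\mathcal{G}$-atom $[x]_{\mathcal{G}}$, and then observe that $R_x\ll\nu$ forces $\nu([x]_{\mathcal{G}})>0$ a.e., so that only countably many atoms survive and they form the partition; the identification of $\nu(\cdot\mid\mathcal{G})$ with $\nu(\cdot\mid D_k)$ on each atom is then forced by $\mathcal{G}$-measurability and the defining integral identity. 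The paper instead argues directly with the density $r(x,y)$ of $R_x(dy)\nu(dx)$ against $\nu\otimes\nu$: it builds the candidate partition from the supports $D_x=\{y:r(x,y)>0\}$, and uses the symmetry identities \eqref{proof:general:balance:condition1a}, \eqref{proof:general:balance:condition2a} and \eqref{proof:general:equality} to show these supports form $\nu$-a.e.\ equivalence classes on which $R_x$ reduces to $\nu(\cdot\mid D_x)$, never invoking the atom/properness machinery. Your approach is shorter and more conceptual, at the cost of leaning on the ``c.g.\ under $\nu$ iff a.e.\ proper'' equivalence and on careful bookkeeping of which full-measure set each a.e.\ statement lives on (in particular that the version of $\nu(\cdot\mid\mathcal{G})$ that is proper agrees a.e., as a measure, with the normalized $R_x$ — which requires the countable generation of $\mathcal{X}$, a point you should make explicit when passing from ``for each $B$, a.e.\ $x$'' to ``a.e.\ $x$, for all $B$''); the paper's computation is more self-contained and yields the sets $D_k$ explicitly as supports of the reinforcement measures. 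Both arguments are valid, and your treatment of the converse matches the paper's appeal to Theorem \ref{result:balance:conditional_distribution}.
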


Note that the existence of the countable partition in \eqref{result:abs_cont:equation:form} does not depend on the enumerability of the color space $\mathbb{X}$ or on the form of the base measure $\nu$. The implications of Theorem \ref{result:abs_cont:main} are illustrated in the next Example \ref{example:abs_cont:k-color}, which describes $k$-color urn models with positive time-homogeneous reinforcement.

\begin{example}[$k$-color urn]\label{example:abs_cont:k-color}
When the space of colors $\mathbb{X}=\{x_1,\ldots,x_k\}$ is finite, then $\nu(\cdot)=\sum_{i=1}^kp_i\delta_{x_i}(\cdot)$, where $p_i\geq0$ represents the initial fraction of balls with color $x_i$ in the urn. Furthermore, it is customary to state $R$ in terms of a reinforcement matrix $[a_{ij}]_{1\leq i,j\leq k}$, where $a_{ij}\geq0$ denotes the number of balls of color $x_j$ that will be added to the urn after color $x_i$ has been observed.

If the observation process $(X_n)_{n\geq1}$ is exchangeable, then we have from \eqref{model:predictive} that
\[p_i=\mathbb{P}(X_2=x_i)=\sum_{j=1}^k\mathbb{P}(X_1=x_j)\mathbb{P}(X_2=x_i|X_1=x_j)=\sum_{j=1}^kp_j\frac{\theta p_i+a_{j,i}}{\theta+\sum_{l=1}^ka_{j,l}};\]
thus, if $p_i=0$ (i.e., there are initially no $x_i$ balls in the urn), then $a_{ji}=0$ for all $j:p_j>0$, so color $x_i$ never appears in the urn. The same argument can be repeated for any color that is not initially present in the urn, so we may assume $p_i>0$ for every $i$, without loss of generality. In that case, $R_x\ll\nu$ and $(X_n)_{n\geq 1}$ satisfies the conditions of Theorem \ref{result:abs_cont:main}.

Assuming $R_x(\mathbb{X})=1$, it follows from the particular form \eqref{result:abs_cont:equation:form} of $R$ that the reinforcement matrix of an exchangeable urn scheme with a finite number of colors has a block-diagonal design, i.e. $\mathbb{X}$ can be partitioned into subsets of colors $D_1,\ldots,D_l$, where a color reinforces and is reinforced only by the colors belonging to the same subset. Furthermore, the rows in each block are identical and equal to the conditional probability of $\nu$ given that a color from the same block has been observed. In other words, each $D_j$ can be viewed as representing different \emph{nuances} of the same color, so that the reinforcement $(i)$ will be identical for, say, light or dark red; and $(ii)$ will be equal to the initial number of balls for each nuance of red, normalized by the total number of red balls.

To illustrate the above points, let $\mathbb{X}=\{x_1,x_2,x_3\}$ and suppose that the urn has initial composition $(w_1,w_2,w_3)$, with $w_1,w_2,w_3>0$, so that $\nu(\cdot)=\sum_{i=1}^3\frac{w_i}{\bar{w}}\delta_{x_i}(\cdot)$ where $\bar{w}=\sum_{i=1}^3w_i$. Put $\bar{w}_i=\sum_{j\neq i}w_j$, $i=1,2,3$. Then any MVPS $(X_n)_{n\geq1}$ on $\mathbb{X}$ with initial composition $(w_1,w_2,w_3)$ is exchangeable if and only if its associated reinforcement matrix $R$ has one of the following forms, up to some multiplicative constants $m,m_1,m_2,m_3>0$,
\begingroup\allowdisplaybreaks
\begin{gather*}
\begin{blockarray}{cccc}
& x_1 & x_2 & x_3 \\
\begin{block}{c[ccc]}
x_1 & m_1\frac{w_1}{\bar{w}} & m_1\frac{w_2}{\bar{w}} & m_1\frac{w_3}{\bar{w}} \\
x_2 & m_2\frac{w_1}{\bar{w}} & m_2\frac{w_2}{\bar{w}} & m_2\frac{w_3}{\bar{w}} \\
x_3 & m_3\frac{w_1}{\bar{w}} & m_3\frac{w_2}{\bar{w}} & m_3\frac{w_3}{\bar{w}} \\
\end{block}\end{blockarray}
\qquad
\begin{blockarray}{ccc}
 & & \\
\begin{block}{[ccc]}
m\frac{w_1}{\bar{w}_3} & m\frac{w_2}{\bar{w}_3} & 0 \\
m\frac{w_1}{\bar{w}_3} & m\frac{w_2}{\bar{w}_3} & 0 \\
0 & 0 & m \\
\end{block}\end{blockarray}
\qquad
\begin{blockarray}{ccc}
 & & \\
\begin{block}{[ccc]}
m\frac{w_1}{\bar{w}_2} & 0 & m\frac{w_3}{\bar{w}_2} \\
0 & m & 0 \\
m\frac{w_1}{\bar{w}_2} & 0 & m\frac{w_3}{\bar{w}_2} \\
\end{block}\end{blockarray}\\
\begin{blockarray}{ccc}
\begin{block}{[ccc]}
m & 0 & 0 \\
0 & m\frac{w_2}{\bar{w}_1} & m\frac{w_3}{\bar{w}_1} \\
0 & m\frac{w_2}{\bar{w}_1} & m\frac{w_3}{\bar{w}_1} \\
\end{block}\end{blockarray}
\qquad
\begin{blockarray}{ccc}
\begin{block}{[ccc]}
m & 0 & 0 \\
0 & m & 0 \\
0 & 0 & m\\
\end{block}\end{blockarray}
\end{gather*}
\endgroup
Note that the first matrix corresponds to the case of an i.i.d. sequence of random variables with marginal distribution $\nu$, and the last one to the three-color \Polya urn model with a Dirichlet prior distribution with parameters $(\frac{w_1}{m},\frac{w_2}{m},\frac{w_3}{m})$.
\end{example}

\begin{example}[Discrete base measure]\label{example:abs_cont:discrete}
The previous example can be extended to MVPSs on more general spaces $\mathbb{X}$ when the base measure $\nu$ is discrete, which is true especially if $\mathbb{X}$ is countable. In this case, $\nu(\cdot)=\sum_{x\in\mathbb{X}_0} p(x)\delta_x(\cdot)$ for some countable subset $\mathbb{X}_0\subseteq\mathbb{X}$ and positive weights $p(x)>0$ such that $\sum_{x\in\mathbb{X}_0} p(x)=1$. If $(X_n)_{n\geq 1}$ is one such process, then it satisfies, by virtue of \eqref{model:predictive},
\[1=\sum_{x\in\mathbb{X}_0}p(x)=\mathbb{P}(X_1\in\mathbb{X}_0)=\mathbb{P}(X_1\in\mathbb{X},X_2\in\mathbb{X}_0)=\sum_{x\in\mathbb{X}_0}p(x)\frac{\theta+R_x(\mathbb{X}_0)}{\theta+R_x(\mathbb{X})};\]
thus, $R_x(\mathbb{X}_0^c)=0$, and so $R_x\ll\nu$ for every $x\in\mathbb{X}$. Theorem \ref{result:abs_cont:main} then implies the existence of a countable partition on $\mathbb{X}$ such that $\frac{R_x}{R_x(\mathbb{X})}$ is a r.c.d for $\nu$ given the $\sigma$-algebra generated by said partition.
\end{example}

The form of the replacement kernel in \eqref{result:abs_cont:equation:form} has further implications on the directing random measure $\tilde{P}$ of the exchangeable process $(X_n)_{n\geq1}$. In particular, it follows from \eqref{model:exchangeable:convergence} that, for every $A\in\mathcal{X}$, on a set of probability one,
\begingroup\allowdisplaybreaks
\begin{align*}
\tilde{P}(A)&=\limN\mathbb{P}(X_{n+1}\in A|X_1,\ldots,X_n)=\limN\sum_k\frac{\theta\nu(D_k)+\sumN R_{X_i}(\mathbb{X})\cdot\mathbbm{1}_{D_k}(X_i)}{\theta+\sumN R_{X_i}(\mathbb{X})}\nu(A|D_k)\\
&=\limN\sum_k\mathbb{P}(X_{n+1}\in D_k|X_1,\ldots,X_n)\nu(A|D_k)=\sum_k\tilde{P}(D_k)\nu(A|D_k);
\end{align*}
\endgroup
thus, as measures,
\begin{equation}\label{result:abs_cont:equation:a.s.limit}
\tilde{P}(\cdot)\overset{a.s.}{=}\sum_k\tilde{P}(D_k)\,\nu(\cdot\mid D_k),
\end{equation}
as $\mathcal{X}$ is countably generated. The next theorem is a direct consequence of this fact and the distribution results in Theorem \ref{result:general:limit} (see also Example 15 in \cite{berti2023} and Section 4.1.2 in \cite{thacker2022}).

\begin{theorem}\label{result:abs_cont:limit}
Let $(X_n)_{n\geq1}$ be an exchangeable MVPS$(\theta,\nu,R)$ with directing random measure $\tilde{P}$. If $R_x\ll\nu$ for $\nu$-a.e. $x$, then $\tilde{P}$ has the form \eqref{result:abs_cont:equation:a.s.limit} and, as $n\rightarrow\infty$, 
\[\sup_{A\in\mathcal{X}}\bigl|\mathbb{P}(X_{n+1}\in A|X_1,\ldots X_n)-\tilde{P}(A)\bigr|\overset{a.s.}{\longrightarrow}0.\]
Moreover, for every choice of indices $k_1,\ldots,k_j$,
\[\bigl(\tilde{P}(D_{k_1}),\ldots,\tilde{P}(D_{k_j}),\tilde{P}(\cap_{i=1}^jD_{k_i}^c)\bigr)\sim\Dir\Bigl(\frac{\theta}{m}\nu(D_{k_1}),\ldots,\frac{\theta}{m}\nu(D_{k_j}),\frac{\theta}{m}\nu(\cap_{i=1}^jD_{k_i}^c)\Bigr),\]
using the notation in Example \ref{example:model:polya_sequence}, where $m>0$ is such that $R_x(\mathbb{X})=m$ for $\nu$-a.e. $x$.
\end{theorem}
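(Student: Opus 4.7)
The plan is to reduce everything to a direct application of Theorems \ref{result:general:limit} and \ref{result:abs_cont:main}. The first two assertions are essentially free: the identity $\tilde{P}(\cdot)=\sum_k\tilde{P}(D_k)\,\nu(\cdot\mid D_k)$ has already been derived pointwise in $A$ in the discussion preceding the theorem and, since $\mathcal{X}$ is countably generated, can be upgraded on a common event of probability one to the full equality of measures \eqref{result:abs_cont:equation:a.s.limit}. The total variation convergence of the predictive distributions to $\tilde{P}$ is a direct restatement of the first part of Theorem \ref{result:general:limit} and requires no further argument.

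For the Dirichlet statement the approach is to feed the explicit form \eqref{result:abs_cont:equation:form} of $R$ from Theorem \ref{result:abs_cont:main} into the stick-breaking representation \eqref{result:general:limit:prior} of $\tilde{P}$ from Theorem \ref{result:general:limit}. Using $R_{Z_\ell}/m=\sum_k\nu(\cdot\mid D_k)\,\mathbbm{1}_{D_k}(Z_\ell)$ and interchanging the order of summation, the representation rearranges (after normalizing by $m$) as
\[\tilde{P}(\cdot)\overset{w}{=}\sum_k\Bigl(\sum_{\ell\geq1}V_\ell\,\mathbbm{1}_{D_k}(Z_\ell)\Bigr)\,\nu(\cdot\mid D_k),\]
where $(V_\ell)_{\ell\geq1}$ and $(Z_\ell)_{\ell\geq1}$ are the stick-breaking ingredients associated with parameters $(\theta/m,\nu)$. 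Comparing with \eqref{result:abs_cont:equation:a.s.limit} and using that the measures $\nu(\cdot\mid D_k)$ have pairwise disjoint supports, one reads off, jointly in $k$,
\[\tilde{P}(D_k)\overset{d}{=}\sum_{\ell\geq1}V_\ell\,\mathbbm{1}_{D_k}(Z_\ell).\]
The right-hand side is precisely the mass assigned to $D_k$ by the Dirichlet process $\sum_{\ell\geq1}V_\ell\,\delta_{Z_\ell}\sim\DP(\theta/m,\nu)$ of Example \ref{example:model:polya_sequence}. Consequently, the joint law of the vector $(\tilde{P}(D_{k_1}),\ldots,\tilde{P}(D_{k_j}),\tilde{P}(\cap_iD_{k_i}^c))$ agrees with that of a $\DP(\theta/m,\nu)$ evaluated on the measurable partition $\{D_{k_1},\ldots,D_{k_j},\cap_iD_{k_i}^c\}$, and the defining Dirichlet property of the Dirichlet process recalled in Example \ref{example:model:polya_sequence} then delivers the stated Dirichlet distribution.

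I do not anticipate any genuine obstacle; the whole argument is essentially a bookkeeping exercise once Theorems \ref{result:general:limit} and \ref{result:abs_cont:main} are in place. The only mild subtlety is that the $D_k$ form a partition of $\mathbb{X}$ only up to a $\nu$-null set, so one should check that each $Z_\ell\sim\nu$ lies in a unique block almost surely, which makes the expansion of $R_{Z_\ell}$ via \eqref{result:abs_cont:equation:form} unambiguous and justifies reading off $\tilde{P}(D_k)$ as the coefficient of $\nu(\cdot\mid D_k)$ in the displayed expansion above.
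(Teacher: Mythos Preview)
Your proposal is correct and follows exactly the route the paper indicates: the paper does not give a separate proof of Theorem \ref{result:abs_cont:limit} but simply declares it ``a direct consequence'' of the a.s.\ identity \eqref{result:abs_cont:equation:a.s.limit} (derived in the preceding paragraph) together with the distributional part of Theorem \ref{result:general:limit}. Your derivation of the Dirichlet marginals by plugging the block form \eqref{result:abs_cont:equation:form} into the stick-breaking representation \eqref{result:general:limit:prior} and reading off $\tilde{P}(D_k)\overset{d}{=}\sum_\ell V_\ell\,\mathbbm{1}_{D_k}(Z_\ell)$, i.e.\ the $\DP(\theta/m,\nu)$ mass on $D_k$, is precisely what ``direct consequence'' means here; note also that Theorem \ref{result:abs_cont:main} states the $D_k$ form a genuine partition of $\mathbb{X}$, so your caveat about $\nu$-null remainders is in fact not needed.
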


The directing random measure \eqref{result:abs_cont:equation:a.s.limit} implies that the observations $X_1,X_2,\ldots$ will form clusters on the level of the sets $D_1,D_2,\ldots$. Let $\tilde{P}^*(\cdot)=\sum_k\tilde{P}(D_k)\delta_k(\cdot)$ and $\nu^*(\cdot)=\sum_k\nu(D_k)\delta_k(\cdot)$. By Theorem \ref{result:abs_cont:limit}, any exchangeable MVPS with absolutely continuous reinforcement is a Dirichlet process mixture model
\begin{eqnarray}
X_n\mid\xi_n,\tilde{P}^*&\overset{ind.}{\sim}&\nu(\cdot\mid D_{\xi_n}) \nonumber\\
\xi_n\mid\tilde{P}^*&\overset{i.i.d.}{\sim}&\tilde{P}^* \nonumber\\
\tilde{P}^*&\sim&\mbox{DP}\Bigl(\frac{\theta}{m},\nu^*\Bigr) \nonumber
\end{eqnarray}
using the notation in Example \ref{example:model:polya_sequence}, where $(\xi_n)_{n\geq1}$ is a sequence of exchangeable random labels such that $\xi_n=k$ if and only if $X_n\in D_k$. We refer to \citep[][Section 3.4]{lijoi2010} for more details on Dirichlet process mixtures.

\subsection{Singular case}\label{section:results:singular}

In general, we have the decomposition of the reinforcement
\[R_x=R_x^\perp+R_x^a,\]
for some finite measures $R_x^\perp$ and $R_x^a$ on $\mathbb{X}$ such that $R_x^\perp$ and $\nu$ are mutually singular, denoted by $R_x^\perp\perp\nu$, and $R_x^a\ll\nu$. The next theorem shows that, for fixed $x$, the replacement $R_x$ of any exchangeable MVPS is either mutually singular or absolutely continuous with respect to $\nu$. In addition, the support of any mutually singular component, $R_x^\perp$, does not intersect the support of any absolutely continuous one, $R_y^a$, implying a complete separation of the two regimes.

\begin{theorem}\label{result:decomposition:main}
Let $(X_n)_{n\geq1}$ be an exchangeable MVPS$(\theta,\nu,R)$. Then there exists a set $\mathcal{S}\in\mathcal{X}$ such that
\[R_x=R_x^\perp\quad\mbox{and}\quad R_x^\perp(\mathcal{S}^c)=0,\qquad\mbox{for }\nu\mbox{-a.e. }x\mbox{ in }\mathcal{S},\]
and
\[R_x=R_x^a\quad\mbox{and}\quad R_x^a(\mathcal{S})=0,\qquad\mbox{for }\nu\mbox{-a.e. }x\mbox{ in }\mathcal{S}^c.\]
\end{theorem}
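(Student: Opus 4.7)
The plan is to reduce the statement to the atomic structure of the sub-$\sigma$-algebra $\mathcal{G}$ supplied by Theorem~\ref{result:general:main}. By that theorem there exists a sub-$\sigma$-algebra $\mathcal{G}\subseteq\mathcal{X}$, countably generated under $\nu$, together with a set $F\in\mathcal{X}$ of full $\nu$-measure on which $\nu(\,\cdot\mid\mathcal{G})$ is proper, such that for $\nu$-a.e.\ $x$,
\[
\frac{R_x(\cdot)}{R_x(\mathbb{X})}=\nu(\,\cdot\mid\mathcal{G})(x).
\]
Since $R_x$ is, up to normalization, a conditional distribution of $\nu$ given a c.g.\ $\sigma$-algebra, the dichotomy sought should follow by separating those points whose $\mathcal{G}$-atom has positive $\nu$-mass from the rest.

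Next I would extract the atomic part explicitly. Let $C\in\mathcal{G}$ with $\nu(C)=1$ be such that $\mathcal{G}\cap C$ is generated by a countable family $\{G_n\}$, and define $f:C\to\{0,1\}^{\mathbb{N}}$ by $f(x)=(\mathbbm{1}_{G_n}(x))_{n\geq 1}$; the $\mathcal{G}$-atom of $x$ is then $[x]_\mathcal{G}=f^{-1}(\{f(x)\})\in\mathcal{G}$, and a standard Dynkin argument shows that any $\mathcal{G}$-measurable function is constant on $[x]_\mathcal{G}$. The pushforward $f_{\ast}\nu$ is a Borel probability on $\{0,1\}^{\mathbb{N}}$, hence has at most countably many atoms; their preimages give a countable family of disjoint positive-$\nu$-measure $\mathcal{G}$-atoms $D_1,D_2,\ldots\in\mathcal{G}$ of $\mathbb{X}$, and I set
\[
S=\mathbb{X}\setminus\bigcup_{k}D_k\in\mathcal{X}.
\]

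Then I would verify the two claims in turn. For $\nu$-a.e.\ $x\in S^c$, $x\in D_k$ for a unique $k$; since $x\mapsto\nu(A\mid\mathcal{G})(x)$ is $\mathcal{G}$-measurable, it is constant on the atom $D_k$, and the defining identity $\int_{D_k}\nu(A\mid\mathcal{G})\,d\nu=\nu(A\cap D_k)$ pins the constant down to $\nu(A\mid D_k)$. Hence $R_x/R_x(\mathbb{X})=\nu(\,\cdot\mid D_k)\ll\nu$ is supported in $D_k\subseteq S^c$, giving $R_x=R_x^a$ and $R_x(S)=0$. For $\nu$-a.e.\ $x\in S$ (namely $x\in F\cap S$), properness forces $\nu(\,\cdot\mid\mathcal{G})(x)$ to be concentrated on $[x]_\mathcal{G}\subseteq S$, but by construction $\nu([x]_\mathcal{G})=0$; hence $R_x$ is carried by a $\nu$-null subset of $S$, which yields $R_x\perp\nu$ and $R_x^\perp(S^c)=0$.

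The main obstacle is the atom-level bookkeeping: one has to juggle simultaneously the proper version of $\nu(\,\cdot\mid\mathcal{G})$, the $\nu$-a.e.\ representation of $R_x$ from Theorem~\ref{result:general:main}, and $\mathcal{G}$-measurability implying constancy on atoms, so as to confine all exceptional null sets inside a single $\nu$-negligible set. Once the positive-measure atoms are isolated, the dichotomy is essentially a pointwise Lebesgue decomposition of $\nu(\,\cdot\mid\mathcal{G})(x)$ with respect to $\nu$, which is forced to be pure because each conditional distribution in the c.g.\ setting sits on a single $\mathcal{G}$-atom.
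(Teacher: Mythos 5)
Your proof is correct, but it takes a genuinely different route from the paper's. The paper does not invoke Theorem~\ref{result:general:main} at all here: it Lebesgue-decomposes $R_x=R_x^\perp+R_x^a$ with singular support $S_x$, then feeds this decomposition back into the symmetry identities extracted from exchangeability (the first-order identity $R_x(dy)\nu(dx)=R_y(dx)\nu(dy)$, the second-order identity from the two-step predictives, and the pointwise consequence $R_y(dz)R_x(dy)=R_x(dz)R_x(dy)$ for $R_x$-a.e.\ $y$) to derive $R_x(S_x)^2=R_x(S_x)$, whence $R_x(S_x)\in\{0,1\}$, and sets $S=\{x:R_x(S_x)=1\}$. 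You instead exploit the full strength of the representation $R_x/R_x(\mathbb{X})=\nu(\cdot\mid\mathcal{G})(x)$ together with the countable generation of $\mathcal{G}$ under $\nu$ and the resulting properness: isolating the countably many positive-$\nu$-mass atoms $D_k$ of $\mathcal{G}$, you get $R_x=R_x(\mathbb{X})\,\nu(\cdot\mid D_k)\ll\nu$ on $\bigcup_k D_k$ and, on the complement, concentration of $R_x$ on the $\nu$-null atom $[x]_{\mathcal{G}}\subseteq S$, hence pure singularity. Each step checks out (the atoms $f^{-1}(\{f(x)\})$ are Borel and lie in $\mathcal{G}$, $\mathcal{G}$-measurable functions factor through $f$ on $C$, and the a.e.\ identity of Theorem~\ref{result:general:main} holds as an identity of measures, so all exceptional sets can be collected into one $\nu$-null set). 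What your route buys is a sharper structural picture: it identifies $S^c$ explicitly as the union of the positive-measure $\mathcal{G}$-atoms and recovers the block structure of Theorem~\ref{result:abs_cont:main} on $S^c$ as a byproduct. What it costs is dependence on the ``moreover'' clause of Theorem~\ref{result:general:main} (countable generation under $\nu$, equivalently a.e.\ properness), which is the most delicate part of that theorem's proof; the paper's argument bypasses this entirely and works only with the raw measure identities.
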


\begin{remark}
It follows from Theorem \ref{result:decomposition:main} that whenever $(X_n)_{n\geq1}$ is \textit{not} i.i.d., then for $\nu$-a.e. $x$, the support of $R_x$, say $S_x\in\mathcal{X}$ such that $R_x(S_x)=1$, has a $\nu$-measure strictly smaller than one, $\nu(S_x)<1$. Indeed, this is obvious if we have an $\mathcal{S}\in\mathcal{X}$ as in Theorem \ref{result:decomposition:main} such that $0<\nu(\mathcal{S})<1$. If instead $\nu(\mathcal{S})=0$, then $R_x\ll\nu$ for $\nu$-a.e. $x$ and $R_x$ has a "block-diagonal" design by Theorem \ref{result:abs_cont:main} with at least two disjoint blocks. On the other hand, if $\nu(\mathcal{S})=1$, then $R_x\perp\nu$ and there exists $S_x\in\mathcal{X}$ such that $R_x(S_x)=1$ and $\nu(S_x)=0$, for $\nu$-a.e. $x$.
\end{remark}

%\begin{remark}
%Let $S\in\mathcal{X}$ be as in Theorem \ref{result:decomposition:main}. It follows from \eqref{result:general:limit:prior} that the directing random measure $\tilde{P}$ of $(X_n)_{n\geq1}$ can be decomposed with respect to $\nu$ into 
%\[\tilde{P}^\perp(\cdot)+\tilde{P}^a(\cdot)=\tilde{P}(\cdot)\overset{w}{=}\sum_{j=1}^\infty V_j\frac{R_{Z_j}^\perp(\cdot)}{R_{Z_j}^\perp(\mathbb{X})}\mathbbm{1}_S(Z_j)+\sum_{j=1}^\infty V_j\frac{R_{Z_j}^a(\cdot)}{R_{Z_j}^a(\mathbb{X})}\mathbbm{1}_{S^c}(Z_j),\]
%so that $\tilde{P}^\perp(S^c)=0=\tilde{P}^a(S)$ a.s., and $S$ does not depend on the particular instance of $\tilde{P}$.
%\end{remark}

The following simple example demonstrates one such case.

\begin{example}
Let $\nu$ be a diffuse probability measure, i.e. $\nu(\{x\})=0$, for $x\in\mathbb{X}$. Fix $\theta=1$. Take $\mathcal{S}\in\mathcal{X}$ such that $0<\nu(\mathcal{S})<1$. Let us define, for $x\in\mathbb{X}$,
\[R_x(\cdot):=\biggl\{\begin{array}{cc} \delta_x(\cdot), & x\in\mathcal{S}; \\ \nu(\cdot\mid\mathcal{S}^c), & x\in\mathcal{S}^c.\end{array}\]
Then $R_x\perp\nu$ and $R_x(\mathcal{S}^c)=0$, for $x\in\mathcal{S}$, and $R_x\ll\nu$ and $R_x(\mathcal{S})=0$, for $x\in\mathcal{S}^c$.

Let $A,B\in\mathcal{X}$ and $x\in\mathbb{X}$. It is an easy exercise to check that
\[\int_AR_x(B)\nu(dx)=\int_BR_x(A)\nu(dx)\qquad\mbox{and}\qquad\int_AR_y(B)R_x(dy)=\int_BR_y(A)R_x(dy).\]
%\begin{align*}
%\int_AR_x(B)\nu(dx)&=\nu(A\cap B\cap C)+\nu(A\cap C^c)\nu(B|C^c)\\ &=\nu(A\cap B\cap C)+\nu(A|C^c)\nu(B\cap C^c)=\int_BR_x(A)\nu(dx),
%\end{align*}
%\begin{align*}
%\int_AR_y(B)R_x(dy)&=\int_AR_y(B)\delta_x(dy)\cdot\mathbbm{1}_C(x)+\int_AR_y(B)\nu(dy|C^c)\cdot\mathbbm{1}_{C^c}(x)\\ &=\int_{A\cap C}\delta_y(B)\delta_x(dy)\cdot\mathbbm{1}_C(x)+\int_{A\cap C^c}\nu(B|C^c)\nu(dy|C^c)\cdot\mathbbm{1}_{C^c}(x)\\ &=\delta_x(A\cap B\cap C)\cdot\mathbbm{1}_C(x)+\nu(A|C^c)\nu(B|C^c)\cdot\mathbbm{1}_{C^c}(x)=\int_BR_y(A)R_x(dy).
%\end{align*}
Therefore, by \citep[][Theorem 7]{berti2023}, the balanced MVPS $(X_n)_{n\geq1}$ with parameters $(\theta,\nu,R)$ is exchangeable.

Define
\[\mathcal{G}:=\bigl\{A\in\mathcal{X}:\nu(A|\mathcal{S}^c)=0\mbox{ or }\nu(A|\mathcal{S}^c)=1\bigr\}.\]
Then $\mathcal{G}$ is a $\sigma$-algebra.

Let $A\in\mathcal{G}$ and $B\in\mathcal{X}$. If $\nu(A|\mathcal{S}^c)=0$, then $\nu(A\cap\mathcal{S}^c)=0$, so $\nu(A\cap\mathcal{S}^c)\nu(B|\mathcal{S}^c)=\nu(A\cap B\cap\mathcal{S}^c)$; else, if $\nu(A|\mathcal{S}^c)=1$, then $\nu(A\cap\mathcal{S}^c)=\nu(\mathcal{S}^c)$ and $\nu(A^c\cap\mathcal{S}^c)=0$, from where $\nu(A^c\cap B\cap\mathcal{S}^c)=0$, and so $\nu(A\cap\mathcal{S}^c)\nu(B|\mathcal{S}^c)=\nu(B\cap\mathcal{S}^c)=\nu(A\cap B\cap\mathcal{S}^c)$. As a result,
\begingroup\allowdisplaybreaks
\begin{align*}
\int_AR_x(B)\nu(dx)&=\int_{A\cap\mathcal{S}}\delta_x(B)\nu(dx)+\int_{A\cap\mathcal{S}^c}\nu(B|\mathcal{S}^c)\nu(dx)\\
&=\nu(A\cap B\cap\mathcal{S})+\nu(A\cap\mathcal{S}^c)\nu(B|\mathcal{S}^c)=\nu(A\cap B\cap\mathcal{S})+\nu(A\cap B\cap\mathcal{S}^c)=\nu(A\cap B).
\end{align*}
\endgroup
On the other hand, for all $t\in[0,1]$ and $B\in\mathcal{X}$, the set $\{x\in\mathbb{X}:R_x(B)\leq t\}$ is equal to one of $B^c\cap\mathcal{S}$, $B^c\cup\mathcal{S}^c$, $\mathcal{S}$ or $\emptyset$, all in $\mathcal{G}$. Therefore, $x\mapsto R_x(B)$ is $\mathcal{G}$-measurable and
\[R_x(\cdot)=\nu(\cdot\mid\mathcal{G})(x)\qquad\mbox{for }\nu\mbox{-a.e. }x.\]
\end{example}

The decomposition implied in Theorem \ref{result:decomposition:main} can be used to analyze separately the singular and the absolutely continuous parts of $R$. Concentrating on $R_x^\perp$, let us assume that $R_x\perp\nu$ for all $x\in\mathbb{X}$. By Theorem \ref{result:general:balance}, such an MVPS is necessarily balanced, say $R_x(\mathbb{X})=1$ for $\nu$-a.e. $x$. Moreover, $\nu$ has to be diffuse; else, if $\nu(\{z\})>0$ for some $z\in\mathbb{X}$, then $R_x(\{z\})=0$, and using \eqref{model:predictive},
\[\nu(\{z\})=\int_\mathbb{X}\frac{\theta\nu(\{z\})+R_x(\{z\})}{\theta+R_x(\mathbb{X})}\nu(dx)=\frac{\theta}{\theta+1}\nu(\{z\}),\]
absurd. We show in Proposition \ref{result:singular:discrete:atoms} below that if $R_x$ is further discrete, then $x$ is an atom of $R_x$ for $\nu$-a.e. $x$. Examples include the \Polya sequence from Example \ref{example:model:polya_sequence} with diffuse $\nu$, and Example 2 in \cite{berti2023} where the authors consider an MVPS with $R_x=\frac{1}{2}(\delta_x+\delta_{-x})$. In addition, Proposition \ref{result:singular:discrete:atoms} states for discrete $R$ that either $R_x=R_y$ or $R_x\perp R_y$, which is reminiscent of the "block-diagonal" design in Section \ref{section:results:abs_cont}.

\begin{proposition}\label{result:singular:discrete:atoms}
Let $(X_n)_{n\geq1}$ be an exchangeable MVPS$(\theta,\nu,R)$. If $R_x\perp\nu$ and $R_x$ is discrete, then $R_x(\{x\})>0$ for $\nu$-a.e. $x$. Moreover,
\[\mbox{for }\nu\mbox{-a.e. }x\mbox{ and }y,\mbox{ either }R_x=R_y\mbox{ or }R_x\perp R_y.\]
\end{proposition}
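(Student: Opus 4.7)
The approach is to invoke Theorem \ref{result:general:main}, which after reducing to the balanced case via Corollary \ref{result:general:balance_probability} gives a sub-$\sigma$-algebra $\mathcal{G}\subseteq\mathcal{X}$, countably generated under $\nu$, such that $R_x(\cdot)=m\,\nu(\cdot\mid\mathcal{G})(x)$ for $\nu$-a.e.\ $x$, and for which $\nu(\cdot\mid\mathcal{G})$ is proper on some $F\in\mathcal{X}$ with $\nu(F)=1$. Applied to the $\mathcal{G}$-atom $[x]_\mathcal{G}\in\mathcal{G}$, properness forces $R_x([x]_\mathcal{G})=m$, so $R_x$ is concentrated on $[x]_\mathcal{G}$; and $\mathcal{G}$-measurability of $x\mapsto\nu(B\mid\mathcal{G})(x)$ for each Borel $B$ makes $x\mapsto R_x$ constant on $\mathcal{G}$-atoms.

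From this, the second claim is immediate: for $\nu$-a.e.\ $x,y$, if $[x]_\mathcal{G}=[y]_\mathcal{G}$ then $R_x=R_y$; otherwise the $\mathcal{G}$-atoms of $x$ and $y$ are disjoint, the supports of $R_x$ and $R_y$ then lie in disjoint sets, and $R_x\independent R_y$.

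For the first claim the key tool is the exchangeability-induced symmetry
\[
\int_A R_x(B)\,\nu(dx)=\int_B R_x(A)\,\nu(dx),\qquad A,B\in\mathcal{X},
\]
obtained by equating the laws of $(X_1,X_2)$ and $(X_2,X_1)$ and cancelling the common $\theta\nu(A)\nu(B)$ term. Equivalently, the bivariate measure $\mu$ on $\mathbb{X}^2$ with $\mu(A\times B)=\int_A R_x(B)\,\nu(dx)$ is symmetric under coordinate swap. Setting $N=\{(x,y)\in\mathbb{X}^2:R_x(\{y\})>0\}$, discreteness of $R_x$ gives $\mu(N)=m=\mu(\mathbb{X}^2)$, so $N$ is $\mu$-conull; by symmetry so is $N^T$, and hence $N\cap N^T$. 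Unpacking: for $\nu$-a.e.\ $x$ and $R_x$-a.e.\ $y$, we have $R_y(\{x\})>0$. A further application of the symmetry with $B=F^c$ gives $R_x(F^c)=0$ for $\nu$-a.e.\ $x$, so $R_x$-a.e.\ $y$ lies in $[x]_\mathcal{G}\cap F$ and thus satisfies $R_y=R_x$ by the opening paragraph. Combining, $R_x(\{x\})=R_y(\{x\})>0$ for $\nu$-a.e.\ $x$.

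The main obstacle I expect is the coordination of $\nu$-null exceptional sets across the simultaneous assertions (representation, properness, and concentration on $\mathcal{G}$-atoms), in particular justifying that the identity $R_y=R_x$ can be invoked for $R_x$-a.e.\ $y$ despite $R_x\independent\nu$; the symmetry of $\mu$ circumvents this cleanly as indicated. A secondary technical concern---the joint measurability of $N$ in $\mathbb{X}\times\mathbb{X}$---is standard for discrete kernels on Polish spaces, obtained via a measurable enumeration of the atoms of $R_x$.
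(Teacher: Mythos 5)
Your proposal is correct, and it reaches the conclusion by a genuinely different organization of the same two underlying ingredients. The paper does not go back to Theorem \ref{result:general:main}'s statement at all: it works directly with the identity $R_x(dz)=R_y(dz)$ for $R_x$-a.e.\ $y$ (equation \eqref{proof:general:equality}, an internal step of that theorem's proof), specialized to the discrete case so that ``$R_x$-a.e.\ $y$'' means ``every $y\in D_x:=\{y:R_x(\{y\})>0\}$''. From $R_y=R_x$ for $y\in D_x$ it reads off $D_y=D_x\ni y$, i.e.\ every point in the support of some $R_x$ is an atom of its own kernel, and then transfers this to $\nu$-a.e.\ $x$ via $\nu(D)=\int_{\mathbb{X}}R_x(D)\,\nu(dx)=1$; the dichotomy $R_x=R_y$ or $R_x\independent R_y$ follows from the equivalence relation the sets $D_x$ induce. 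You instead reconstruct the constancy-of-$R_\cdot$-on-its-own-support fact from the \emph{statement} of Theorem \ref{result:general:main} (properness of $\nu(\cdot\mid\mathcal{G})$ plus $\mathcal{G}$-measurability, hence constancy on $\mathcal{G}$-atoms), and you obtain the atom property by symmetrizing the bivariate measure $\mu(dx,dy)=R_x(dy)\nu(dx)$ and intersecting $N$ with $N^T$ --- which is the same symmetry \eqref{proof:general:balance:condition1a} the paper uses, deployed at a different point. Your route is self-contained given only the published statements (a real advantage for a blind proof), your second claim is cleaner and does not even use discreteness, and your use of the symmetry to push $\nu$-null exceptional sets to $R_x$-null sets (e.g.\ $R_x(F^c)=0$, $R_x(C^c)=0$) correctly resolves the only delicate point created by $R_x\independent\nu$. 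The two technicalities you flag --- joint measurability of $N$ (via joint measurability of $(x,y)\mapsto R_x(\{y\})=\int\mathbbm{1}_{\{z=y\}}R_x(dz)$) and measurability of the $\mathcal{G}$-atoms $[x]_\mathcal{G}$ on the $\nu$-conull set where $\mathcal{G}$ is countably generated --- are indeed standard and do not create gaps.
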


\section{Proofs}\label{section:proofs}

\begin{proof}[Proof of Proposition \ref{result:i.i.d}] 
Let $(X_n)_{n\geq1}$ be an i.i.d. MVPS$(\theta,\nu,R)$. Take $A,B\in\mathcal{X}$. From \eqref{model:predictive},
\[\mathbb{P}(X_1\in A,X_2\in B)=\int_A\mathbb{P}(X_2\in B|X_1=x)\mathbb{P}(X_1\in dx)=\int_A\frac{\theta\nu(B)+R_x(B)}{\theta+R_x(\mathbb{X})}\nu(dx).\]
On the other hand,
\[\mathbb{P}(X_1\in A,X_2\in B)=\mathbb{P}(X_1\in A)\mathbb{P}(X_2\in B)=\nu(A)\nu(B)=\int_A\nu(B)\nu(dx).\]
Since $A$ is arbitrary, then $\frac{\theta\nu(B)+R_x(B)}{\theta+R_x(\mathbb{X})}=\nu(B)$ for $\nu$-a.e. $x$, and so $\frac{R_x(B)}{R_x(\mathbb{X})}=\nu(B)$ for $\nu$-a.e. $x$. As $\mathcal{X}$ is countably generated, we get, as measures, $\frac{R_x(dy)}{R_x(\mathbb{X})}=\nu(dy)$ for $\nu$-a.e. $x$.

Conversely, if $R$ is of the form \eqref{result:i.i.d.:equation:form}, then, for any $A\in\mathcal{X}$, 
\[\mathbb{P}(X_{n+1}\in A|X_1,\ldots,X_n)=\frac{\theta\nu(A)+\sumN R_{X_i}(\mathbb{X})\nu(A)}{\theta+\sumN R_{X_i}(\mathbb{X})}=\nu(A)\qquad\mbox{a.s.},\]
implying that $(X_n)_{n\geq1}$ is i.i.d. with marginal distribution $\nu$.
\end{proof}

\begin{proof}[Proof of Theorem \ref{result:general:balance}]
Let $(X_n)_{n\geq1}$ be an exchangeable MVPS$(\theta,\nu,R)$. Define
\[f(x):=R_x(\mathbb{X}),\quad\mbox{for }x\in\mathbb{X},\qquad\mbox{and}\qquad H:=\bigl\{(x,y)\in\mathbb{X}^2:f(x)=f(y)\bigr\},\]
and let $H_x$ be the $x$-section of $H$. It follows from exchangeability and the form of the predictive distributions \eqref{model:predictive} that, for every $A,B\in\mathcal{X}$,
\begingroup\allowdisplaybreaks
\begin{align}
\begin{aligned}\label{proof:general:balance:identity1_original}
\int_{(x,y)\in A\times B}\frac{\theta\nu(dy)+R_x(dy)}{\theta+f(x)}\nu(dx)&=\mathbb{P}(X_1\in A,X_2\in B)\\
&=\mathbb{P}(X_1\in B,X_2\in A)=\int_{(x,y)\in A\times B}\frac{\theta\nu(dx)+R_y(dx)}{\theta+f(y)}\nu(dy).
\end{aligned}
\end{align}
\endgroup 

We proceed by making some important observations from \eqref{proof:general:balance:identity1_original}. Let $C\in\mathcal{X}$ be such that $\nu(C)=0$. Applying \eqref{proof:general:balance:identity1_original} to $C\times\mathbb{X}$, we get $\nu(C)=\int_\mathbb{X}\frac{R_x(C)}{\theta+f(x)}\nu(dx)$, so $\frac{R_x(C)}{\theta+f(x)}=0$ for $\nu$-a.e. $x$, and hence
\begin{equation}\label{proof:general:balance:identity1_resul1}
\mbox{for any }C\in\mathcal{X},\mbox{ if }\nu(C)=0,\mbox{ then }R_x(C)=0\mbox{ for }\nu\mbox{-a.e. }x.
\end{equation}
On the other hand, \eqref{proof:general:balance:identity1_original} implies the equivalence of the measures
\[\frac{\theta\nu(dy)+R_x(dy)}{\theta+f(x)}\nu(dx)=\frac{\theta\nu(dx)+R_y(dx)}{\theta+f(y)}\nu(dy);\]
%% Define $\pi(A\times B)=\mu(B\times A)$. Then $\int_{A\times B}\pi(dx,dy)=\int_{B\times A}\pi(dx,dy)=\int_{B\times A}\mu(dy,dx)=\int_{A\times B}\mu(dx,dy)$, so $\pi(dx,dy)=\mu(dx,dy)=\pi(dy,dx)$.
thus, $\nu(dx)=\int_{y\in\mathbb{X}}\frac{\theta\nu(dx)+R_y(dx)}{\theta+f(y)}\nu(dy)$, and by rearranging the terms,
\begin{equation}\label{proof:general:balance:identity1_resul2}
\int_{y\in\mathbb{X}}\frac{R_y(dx)}{\theta+f(y)}\nu(dy)=\int_{y\in\mathbb{X}}\frac{f(y)}{\theta+f(y)}\nu(dy)\nu(dx).
\end{equation}
%% Observe that $\int_\mathbb{X}\frac{\theta}{\theta+f(y)}\nu(dy)<\infty$.
Moreover, the following two measures, defined as the indefinite integrals of $(\theta+f(x))(\theta+f(y))$ with respect to $\frac{\theta\nu(dy)+R_x(dy)}{\theta+f(x)}\nu(dx)$ and $\frac{\theta\nu(dx)+R_y(dx)}{\theta+f(y)}\nu(dy)$, are equal
\[(\theta+f(y))\bigl(\theta\nu(dy)+R_x(dy)\bigr)\nu(dx)=(\theta+f(x))\bigl(\theta\nu(dx)+R_y(dx)\bigr)\nu(dy),\]
which after simplification becomes
\begin{equation}\label{proof:general:balance:identity1}
\theta f(y)\nu(dx)\nu(dy)+\bigl(\theta+f(y)\bigr)R_x(dy)\nu(dx)=\theta f(x)\nu(dx)\nu(dy)+\bigl(\theta+f(x)\bigr)R_y(dx)\nu(dy).
\end{equation}
In particular, restricting the above measures to $H$ where $f(x)=f(y)$, we obtain
\begin{equation}\label{proof:general:balance:condition1a}
\mathbbm{1}_H(x,y)R_x(dy)\nu(dx)=\mathbbm{1}_H(x,y)R_y(dx)\nu(dy).
\end{equation}

We now consider the implications that the invariance of the two-step-ahead predictive distributions has on $R$. Let $A,B\in\mathcal{X}$. Since $X_1\sim\nu$, it follows from \eqref{model:exchangeable:predictive} in Theorem \ref{model:predictive} that, for $\nu$-a.e. $x$,
\begingroup\allowdisplaybreaks
\begin{align*}
&\int_{(y,z)\in A\times B}\frac{\theta\nu(dz)+R_x(dz)+R_y(dz)}{\theta+f(x)+f(y)}\frac{\theta\nu(dy)+R_x(dy)}{\theta+f(x)}=\mathbb{P}(X_2\in A,X_3\in B|X_1=x)\\
&\hspace{5em}=\mathbb{P}(X_2\in B,X_3\in A|X_1=x)=\int_{(y,z)\in A\times B}\frac{\theta\nu(dy)+R_x(dy)+R_z(dy)}{\theta+f(x)+f(z)}\frac{\theta\nu(dz)+R_x(dz)}{\theta+f(x)}.
\end{align*}
\endgroup
Since $\mathcal{X}$ is countably generated, we may combine the non-null sets and get, for $\nu$-a.e. $x$, the equivalence of the measures
\[\frac{\theta\nu(dz)+R_x(dz)+R_y(dz)}{(\theta+f(x)+f(y))(\theta+f(x))}\bigl(\theta\nu(dy)+R_x(dy)\bigr)=\frac{\theta\nu(dy)+R_x(dy)+R_z(dy)}{(\theta+f(x)+f(z))(\theta+f(x))}\bigl(\theta\nu(dz)+R_x(dz)\bigr).\]
Arguing as in \eqref{proof:general:balance:identity1}, we get for $\nu$-a.e. $x$
\begingroup\allowdisplaybreaks
\begin{align}
\begin{aligned}\label{proof:general:balance:identity2_original}
\bigl(\theta+f(x)+f(z)\bigr)&\bigl(\theta\nu(dz)+R_x(dz)+R_y(dz)\bigr)\bigl(\theta\nu(dy)+R_x(dy)\bigr)\\
&=\bigl(\theta+f(x)+f(y)\bigr)\bigl(\theta\nu(dy)+R_x(dy)+R_z(dy)\bigr)\bigl(\theta\nu(dz)+R_x(dz)\bigr),
\end{aligned}
\end{align}
\endgroup
which after simplification\footnote{Note that the measures in \eqref{proof:general:balance:identity2_original}, and in the sequel, need not be finite, so extra care should be taken when cancelling common terms. The measures are, however, $\sigma$-finite with respect to the product of the sets $F_n:=\{x\in\mathbb{X}:n^{-1}\leq f(x)\leq n\}$, $n\geq1$; hence, if one works with the measures in \eqref{proof:general:balance:identity2_original} restricted to $(x,y,z)\in F_n\times F_n\times F_n$, all quantities from there on will be finite. The general result then follows from monotone convergence, letting $\mathbbm{1}_{F_n}\uparrow 1$ as $n\rightarrow\infty$. For exposition reasons, we will omit this procedure and work as if all quantities were finite to begin with.
%%% $\theta\int_C\mu_1(dx,dy)+\int_C\mu_2(dx,dy)=\theta\int_C\nu_1(dx,dy)+\int_C\nu_2(dx,dy)$ and $\theta\downarrow0$, so $\int_C\mu_2(dx,dy)=\int_C\nu_2(dx,dy)$.
%%% $\int_C\mu_1(dx,dy)+\int_C\mu_2(dx,dy)=\int_C\nu_1(dx,dy)+\int_C\nu_2(dx,dy)$ implies $\int_C(\mu_1-\nu_1)(dx,dy)=\int_C(\nu_2-\mu_2)(dx,dy).$
} becomes
\begingroup\allowdisplaybreaks
\begin{align*}
&\begin{aligned}
\theta^2\Bigl\{&f(z)\nu(dy)\nu(dz)+R_y(dz)\nu(dy)\Bigr\}\\
&+\theta\Bigl\{f(z)R_x(dy)\nu(dz)+f(z)R_x(dz)\nu(dy)+\bigl(f(x)+f(z)\bigr)R_y(dz)\nu(dy)+R_y(dz)R_x(dy)\Bigr\}\\
&+\Bigl\{f(z)R_x(dy)R_x(dz)+\bigl(f(x)+f(z)\bigr)R_y(dz)R_x(dy)\Bigr\}
\end{aligned}\\
&\begin{aligned}
=\theta^2\Bigl\{&f(y)\nu(dy)\nu(dz)+R_z(dy)\nu(dz)\Bigr\}\\
&+\theta\Bigl\{f(y)R_x(dz)\nu(dy)+f(y)R_x(dy)\nu(dz)+\bigl(f(x)+f(y)\bigr)R_z(dy)\nu(dz)+R_z(dy)R_x(dz)\Bigr\}\\
&+\Bigl\{f(y)R_x(dy)R_x(dz)+\bigl(f(x)+f(y)\bigr)R_z(dy)R_x(dz)\Bigr\}.
\end{aligned}
\end{align*}
\endgroup
Using \eqref{proof:general:balance:identity1} w.r.t. $(y,z)$, we may collect $\theta f(z)R_y(dz)\nu(dy)$ and the two terms with $\theta^2$ on the left-hand side and cancel them with the sum of $\theta f(y)R_z(dy)\nu(dy)$ and the two terms with $\theta^2$ on the right-hand side to obtain
\begingroup\allowdisplaybreaks
\begin{align}\label{proof:general:balance:identity2}
\begin{aligned}
&\begin{aligned}\theta f(z)R_x(dy)\nu(dz)&+\theta f(z)R_x(dz)\nu(dy)+\theta f(x)R_y(dz)\nu(dy)\\
&+f(z)R_x(dy)R_x(dz)+\bigl(\theta+f(x)+f(z)\bigr)R_y(dz)R_x(dy)\end{aligned}\\
&\hspace{2em}\begin{aligned}=\theta f(y)R_x(dz)\nu(dy)&+\theta f(y)R_x(dy)\nu(dz)+\theta f(x)R_z(dy)\nu(dz)\\
&+f(y)R_x(dy)R_x(dz)+\bigl(\theta+f(x)+f(y)\bigr)R_z(dy)R_x(dz);\end{aligned}
\end{aligned}
\end{align}
\endgroup
thus, restricting the above measures to $H$ where $f(y)=f(z)$,
\begingroup\allowdisplaybreaks
\begin{align*}
\mathbbm{1}_{H}(y,z)\theta f(x)R_y(dz)\nu(dy)&+\mathbbm{1}_{H}(y,z)\bigl(\theta+f(x)+f(y)\bigr)R_y(dz)R_x(dy)\\
&=\mathbbm{1}_{H}(y,z)\theta f(x)R_z(dy)\nu(dz)+\mathbbm{1}_{H}(y,z)\bigl(\theta+f(x)+f(y)\bigr)R_z(dy)R_x(dz).
\end{align*}
\endgroup
By \eqref{proof:general:balance:condition1a}, we have $\mathbbm{1}_{H}(y,z)R_y(dz)\nu(dy)=\mathbbm{1}_{H}(y,z)R_z(dy)\nu(dz)$, and so, for $\nu$-a.e. $x$,
\[\mathbbm{1}_{H}(y,z)\bigl(\theta+f(x)+f(y)\bigr)R_y(dz)R_x(dy)=\mathbbm{1}_{H}(y,z)\bigl(\theta+f(x)+f(y)\bigr)R_z(dy)R_x(dz),\]
which after ``dividing'' by $\bigl(\theta+f(x)+f(y)\bigr)$, i.e. using again the notion of indefinite integrals, becomes 
\begin{equation}\label{proof:general:balance:condition2a}
\mathbbm{1}_{H}(y,z)R_y(dz)R_x(dy)=\mathbbm{1}_{H}(y,z)R_z(dy)R_x(dz).
\end{equation}

Next, considering equation \eqref{model:exchangeable:predictive} for $n=2$, we get that, as measures, 
\begin{equation}\label{proof:general:balance:identity3_original}
\mathbb{P}(X_3\in dz,X_4\in du|X_1=x,X_2=y)=\mathbb{P}(X_3\in du,X_4\in dz|X_1=x,X_2=y),
\end{equation}
for $\mathbb{P}_{(X_1,X_2)}$-a.e. $(x,y)$ in $\mathbb{X}^2$. It is an easy observation from \eqref{proof:general:balance:identity1_original} that $(\nu\times\nu)\ll\mathbb{P}_{(X_1,X_2)}$; thus, \eqref{proof:general:balance:identity3_original} holds true for $(\nu\times\nu)$-a.e. $(x,y)$ in $\mathbb{X}^2$. Equation \eqref{proof:general:balance:identity3_original} implies the equivalence of the measures
%% $0=\mathbb{P}((X_1,X_2)\in C)\geq\int_C\frac{\theta}{\theta+R_x(\mathbb{X})}\nu^2(dx,dy)=\int_\mathbb{X}\nu(C_x)\frac{\theta}{\theta+R_x(\mathbb{X})}\nu(dx)$, so $\nu(C_x)\frac{\theta}{\theta+R_x(\mathbb{X})}=0$ for $\nu$-a.e. $x$.
\begingroup\allowdisplaybreaks
\begin{align*}
&\frac{\theta\nu(du)+R_x(du)+R_y(du)+R_z(du)}{\theta+f(x)+f(y)+f(z)}\frac{\theta\nu(dz)+R_x(dz)+R_y(dz)}{\theta+f(x)+f(y)}\\
&\hspace{8em}=\frac{\theta\nu(dz)+R_x(dz)+R_y(dz)+R_u(dz)}{\theta+f(x)+f(y)+f(u)}\frac{\theta\nu(du)+R_x(du)+R_y(du)}{\theta+f(x)+f(y)},
\end{align*}
\endgroup
so arguing as in \eqref{proof:general:balance:identity2_original},
\begingroup\allowdisplaybreaks
\begin{align*}
\bigl(\theta+f(x)&+f(y)+f(u)\bigr)\bigl(\theta\nu(du)+R_x(du)+R_y(du)+R_z(du)\bigr)\bigl(\theta\nu(dz)+R_x(dz)+R_y(dz)\bigr)\\
&=\bigl(\theta+f(x)+f(y)+f(z)\bigr)\bigl(\theta\nu(dz)+R_x(dz)+R_y(dz)+R_u(dz)\bigr)\bigl(\theta\nu(du)+R_x(du)+R_y(du)\bigr).
\end{align*}
\endgroup
After some simple algebra, we obtain
\begingroup\allowdisplaybreaks
\begin{align*}
&\begin{aligned}
\theta^2\bigl\{f(u)&\nu(dz)\nu(du)+R_z(du)\nu(dz)\bigr\}\\
&+\theta\bigl\{f(u)R_x(du)\nu(dz)+f(u)R_y(du)\nu(dz)+\bigl(f(x)+f(y)+f(u)\bigr)R_z(du)\nu(dz)\\
&\hspace{3em}+f(u)R_x(dz)\nu(du)+f(u)R_y(dz)\nu(du)+R_z(du)R_x(dz)+R_z(du)R_y(dz)\bigr\}\\
&+\bigl\{f(u)R_x(dz)R_x(du)+f(u)R_y(du)R_x(dz)+\bigl(f(x)+f(y)+f(u)\bigr)R_z(du)R_x(dz)\\
&\hspace{3em}+f(u)R_x(du)R_y(dz)+f(u)R_y(dz)R_y(du)+\bigl(f(x)+f(y)+f(u)\bigr)R_z(du)R_y(dz)\bigr\}\end{aligned}\\
&\begin{aligned}
=\theta^2\bigl\{f(z)&\nu(dz)\nu(du)+R_u(dz)\nu(du)\bigr\}\\
&+\theta\bigl\{f(z)R_x(dz)\nu(du)+f(z)R_y(dz)\nu(du)+\bigl(f(x)+f(y)+f(z)\bigr)R_u(dz)\nu(du)\\
&\hspace{3em}+f(z)R_x(du)\nu(dz)+f(z)R_y(du)\nu(dz)+R_u(dz)R_x(du)+R_u(dz)R_y(du)\bigr\}\\
&+\bigl\{f(z)R_x(dz)R_x(du)+f(z)R_y(dz)R_x(du)+\bigl(f(x)+f(y)+f(z)\bigr)R_u(dz)R_x(du)\\
&\hspace{3em}+f(z)R_x(dz)R_y(du)+f(z)R_y(dz)R_y(du)+\bigl(f(x)+f(y)+f(z)\bigr)R_u(dz)R_y(du)\bigr\},\end{aligned}
\end{align*}
\endgroup
which after further rearrangement becomes
\begingroup\allowdisplaybreaks
\begin{align*}
&\begin{aligned}
\theta\bigl\{\theta f(u)&\nu(dz)\nu(du)+(\theta+f(u))R_z(du)\nu(dz)\bigr\}\\
&+\bigl\{\theta f(u)R_x(dz)\nu(du)+\theta f(u)R_x(du)\nu(dz)+\theta f(x)R_z(du)\nu(dz)\\
&\hspace{8em}+\bigl(\theta+f(x)+f(u)\bigr)R_z(du)R_x(dz)+f(u)R_x(dz)R_x(du)\bigr\}\\
&+\bigr\{\theta f(u)R_y(dz)\nu(du)+\theta f(u)R_y(du)\nu(dz)+\theta f(y)R_z(du)\nu(dz)\\
&\hspace{8em}+\bigl(\theta+f(y)+f(u)\bigr)R_z(du)R_y(dz)+f(u)R_y(dz)R_y(du)\bigr\}\\
&+\bigl\{f(x)R_z(du)R_y(dz)+f(u)R_x(dz)R_y(du)+f(y)R_z(du)R_x(dz)+f(u)R_x(du)R_y(dz)\bigr\}
\end{aligned}\\
&\begin{aligned}
=\theta\bigl\{\theta f(z)&\nu(dz)\nu(du)+(\theta+f(z))R_u(dz)\nu(du)\bigr\}\\
&+\bigl\{\theta f(z)R_x(du)\nu(dz)+\theta f(z)R_x(dz)\nu(du)+\theta f(x)R_u(dz)\nu(du)\\
&\hspace{8em}+\bigl(\theta+f(x)+f(z)\bigr)R_u(dz)R_x(du)+f(z)R_x(dz)R_x(du)\bigr\}\\
&+\bigl\{\theta f(z)R_y(du)\nu(dz)+\theta f(z)R_y(dz)\nu(du)+\theta f(y)R_u(dz)\nu(du)\\
&\hspace{8em}+\bigl(\theta+f(y)+f(z)\bigr)R_u(dz)R_y(du)+f(z)R_y(dz)R_y(du)\bigr\}\\
&+\bigl\{f(x)R_u(dz)R_y(du)+f(z)R_x(du)R_y(dz)+f(y)R_u(dz)R_x(du)+f(z)R_x(dz)R_y(du)\bigr\}.\end{aligned}
\end{align*}
\endgroup
It follows from \eqref{proof:general:balance:identity1} w.r.t. $(z,u)$, \eqref{proof:general:balance:identity2} w.r.t. $(x,z,u)$, and \eqref{proof:general:balance:identity2} w.r.t. $(y,z,u)$ that we may cancel the terms in the first, second and third brackets, respectively, to get, for $(\nu\times\nu)$-a.e. $(x,y)$ in $\mathbb{X}^2$,
\begingroup\allowdisplaybreaks
\begin{align*}
f(x)R_z(du)R_y&(dz)+f(u)R_x(dz)R_y(du)+f(y)R_z(du)R_x(dz)+f(u)R_x(du)R_y(dz)\\
&=f(x)R_u(dz)R_y(du)+f(z)R_x(du)R_y(dz)+f(y)R_u(dz)R_x(du)+f(z)R_x(dz)R_y(du),
\end{align*}
\endgroup
from where\footnote{Again we refer to the footnote on p. 12 on how to deal with signed measures like those in \eqref{proof:general:balance:identity3} in order to avoid indeterminate forms.}
\begingroup\allowdisplaybreaks
\begin{align}
\begin{aligned}\label{proof:general:balance:identity3}
f(x)R_z(du)R_y(dz)&+f(y)R_z(du)R_x(dz)+\bigl(f(u)-f(z)\bigr)R_x(dz)R_y(du)+\bigl(f(u)-f(z)\bigr)R_x(du)R_y(dz)\\
&=f(x)R_y(du)R_u(dz)+f(y)R_x(du)R_u(dz).
\end{aligned}
\end{align}
\endgroup
Dividing both sides in \eqref{proof:general:balance:identity3} by $(\theta+f(x))(\theta+f(y))$ and integrating with respect to $\nu(dx)$ and $\nu(dy)$ yields
%%%% Check whether \eqref{proof:general:balance:identity1}-\eqref{proof:general:balance:identity3} are sufficient for $(X_{n+1},X_{n+2})\mid X_1,\ldots,X_n\overset{d}{=}(X_{n+2},X_{n+1})\mid X_1,\ldots,X_n$, for $n\geq3$. In fact, the only problem is the almost sure sets w.r.t. $\mathbb{P}_{(X_1,\ldots,X_n)}$. 
\begingroup\allowdisplaybreaks
\begin{align*}
&\begin{aligned}\int_\mathbb{X}\int_\mathbb{X}\frac{f(x)}{(\theta+f(x))(\theta+f(y))}&R_z(du)R_y(dz)\nu(dx)\nu(dy)\\
&+\int_\mathbb{X}\int_\mathbb{X}\frac{f(y)}{(\theta+f(x))(\theta+f(y))}R_z(du)R_x(dz)\nu(dx)\nu(dy)\\
&+\int_\mathbb{X}\int_\mathbb{X}\frac{f(u)-f(z)}{(\theta+f(x))(\theta+f(y))}R_x(dz)R_y(du)\nu(dx)\nu(dy)\\
&+\int_\mathbb{X}\int_\mathbb{X}\frac{f(u)-f(z)}{(\theta+f(x))(\theta+f(y))}R_x(du)R_y(dz)\nu(dx)\nu(dy)\end{aligned}\\
&\begin{aligned}=\int_\mathbb{X}\int_\mathbb{X}\frac{f(x)}{(\theta+f(x))(\theta+f(y))}&R_y(du)R_u(dz)\nu(dx)\nu(dy)\\
&+\int_\mathbb{X}\int_\mathbb{X}\frac{f(y)}{(\theta+f(x))(\theta+f(y))}R_x(du)R_u(dz)\nu(dx)\nu(dy).\end{aligned}
\end{align*}
\endgroup
Now, using the identity in \eqref{proof:general:balance:identity1_resul2} w.r.t. $\int_\mathbb{X}\frac{R_x(dz)}{\theta+f(x)}\nu(dx)$, $\int_\mathbb{X}\frac{R_x(du)}{\theta+f(x)}\nu(dx)$, $\int_\mathbb{X}\frac{R_y(dz)}{\theta+f(y)}\nu(dy)$ and $\int_\mathbb{X}\frac{R_y(du)}{\theta+f(y)}\nu(dy)$ on, respectively, the second and third terms on the left, the forth term on the left and the second on the right, the first and forth terms on the left, the third term on the left and the first on the right of the equation, leads to
\begingroup\allowdisplaybreaks
\begin{align*}
&\begin{aligned}2\int_\mathbb{X}\int_\mathbb{X}\frac{f(x)f(y)}{(\theta+f(x))(\theta+f(y))}&R_z(du)\nu(dx)\nu(dy)\nu(dz)\\
&+2\int_\mathbb{X}\int_\mathbb{X}\frac{f(x)f(y)\bigl(f(u)-f(z)\bigr)}{(\theta+f(x))(\theta+f(y))}\nu(dx)\nu(dy)\nu(dz)\nu(du)\end{aligned}\\
&=2\int_\mathbb{X}\int_\mathbb{X}\frac{f(x)f(y)}{(\theta+f(x))(\theta+f(y))}R_u(dz)\nu(dx)\nu(dy)\nu(du).
\end{align*}
\endgroup
Observe that the terms involving $z$ and $u$ can be pulled out of the integrals; thus, dividing both sides by $2\int_\mathbb{X}\int_\mathbb{X}\frac{f(x)f(y)}{(\theta+f(x))(\theta+f(y))}\nu(dx)\nu(dy)$, we get
%% Even if we allow $R_x(\mathbb{X})=0$ for $x\in Z$, then $2\int_\mathbb{X}\int_\mathbb{X}\frac{f(x)f(y)}{(\theta+f(x))(\theta+f(y))}\nu(dx)\nu(dy)=2\int_{Z^c}\int_{Z^c}\frac{f(x)f(y)}{(\theta+f(x))(\theta+f(y))}\nu(dx)\nu(dy)>0$.
\[R_z(du)\nu(dz)+\bigl(f(u)-f(z)\bigr)\nu(dz)\nu(du)=R_u(dz)\nu(du).\]
On the other hand, we have $R_u(dz)\nu(du)=\frac{\theta+f(u)}{\theta+f(z)}R_z(du)\nu(dz)+\theta\frac{f(u)-f(z)}{\theta+f(z)}\nu(dz)\nu(du)$ from \eqref{proof:general:balance:identity1}, so substituting above,
\[R_z(du)\nu(dz)+\bigl(f(u)-f(z)\bigr)\nu(dz)\nu(du)=\frac{\theta+f(u)}{\theta+f(z)}R_z(du)\nu(dz)+\theta\frac{f(u)-f(z)}{\theta+f(z)}\nu(dz)\nu(du),\]
from where
\[\bigl(f(u)-f(z)\bigr)R_z(du)\nu(dz)=\bigl(f(u)-f(z)\bigr)f(z)\nu(dz)\nu(du).\]
Therefore, working on $H^c$ where $f(z)\neq f(u)$, we obtain
\begin{equation}\label{proof:general:balance:equation:i.i.d.1}
\mathbbm{1}_{H^c}(z,u)R_z(du)\nu(dz)=\mathbbm{1}_{H^c}(z,u)f(z)\nu(dz)\nu(du).
\end{equation}
%% $\int_A\mu(x;B)\nu(dx)=\int_A\pi(x;B)\nu(dx)$, so $\mu(x;B)=\pi(x;B)$ for $\nu$-a.e. $x$. But $\mathbb{X}$ is separable, so $\mu(x;dy)=\pi(x;dy)$ for $\nu$-a.e. $x$.

Going back to \eqref{proof:general:balance:identity2}, we now focus on the triplets $(x,y,z)$ in $\mathbb{X}^3$ for which $f(x)=f(y)$ and $f(x)\neq f(z)$, so taking the indefinite integral of the terms in \eqref{proof:general:balance:identity2} with respect to $\nu(dx)$, we get that, after some simple algebra,
\begingroup\allowdisplaybreaks
\begin{align}
\begin{aligned}\label{proof:general:balance:equation:i.i.d.2:prelim}
&\begin{aligned}\mathbbm{1}_H(x,y)\mathbbm{1}_{H^c}(x,z)\Bigl\{&\theta\bigl(f(z)-f(x)\bigr)R_x(dy)\nu(dz)\nu(dx)+\theta\bigl(f(z)-f(x)\bigr)R_x(dz)\nu(dy)\nu(dx)\\
&+\bigl(f(z)-f(x)\bigr)R_x(dy)R_x(dz)\nu(dx)+\theta f(x)R_y(dz)\nu(dy)\nu(dx)\\
&+\bigl(\theta+f(x)+f(z)\bigr)R_y(dz)R_x(dy)\nu(dx)\Bigr\}\end{aligned}\\
&=\mathbbm{1}_H(x,y)\mathbbm{1}_{H^c}(x,z)\Bigl\{\theta f(x)R_z(dy)\nu(dz)\nu(dx)+\bigl(\theta+2f(x)\bigr)R_z(dy)R_x(dz)\nu(dx)\Bigr\}.
\end{aligned}
\end{align}
\endgroup
On the set where $f(y)=f(x)\neq f(z)$, equation \eqref{proof:general:balance:equation:i.i.d.1} implies $(i)$ $R_x(dz)=f(x)\nu(dz)$ for $\nu$-a.e. $x$; $(ii)$ $R_y(dz)=f(y)\nu(dz)=f(x)\nu(dz)$ for $\nu$-a.e. $y$; and $(iii)$ $R_z(dy)=f(z)\nu(dy)$ for $\nu$-a.e. $z$. Moreover, by \eqref{proof:general:balance:identity1_resul1}, $(ii)$ implies for $\nu$-a.e. $x$ that $(iv)$ $R_y(dz)=f(x)\nu(dz)$ for $R_x$-a.e. $y$. Thus,
\begingroup\allowdisplaybreaks
\begin{gather*}
\mathbbm{1}_H(x,y)\mathbbm{1}_{H^c}(x,z)R_x(dz)\nu(dy)\nu(dx)=\mathbbm{1}_H(x,y)\mathbbm{1}_{H^c}(x,z)f(x)\nu(dx)\nu(dy)\nu(dz)\quad\mbox{by }(i),\\
\mathbbm{1}_H(x,y)\mathbbm{1}_{H^c}(x,z)R_x(dy)R_x(dz)\nu(dx)=\mathbbm{1}_H(x,y)\mathbbm{1}_{H^c}(x,z)f(x)R_x(dy)\nu(dz)\nu(dx)\quad\mbox{by }(i),\\
\mathbbm{1}_H(x,y)\mathbbm{1}_{H^c}(x,z)R_y(dz)\nu(dy)\nu(dx)=\mathbbm{1}_H(x,y)\mathbbm{1}_{H^c}(x,z)f(x)\nu(dx)\nu(dy)\nu(dz)\quad\mbox{by }(ii),\\
\mathbbm{1}_H(x,y)\mathbbm{1}_{H^c}(x,z)R_y(dz)R_x(dy)\nu(dx)=\mathbbm{1}_H(x,y)\mathbbm{1}_{H^c}(x,z)f(x)R_x(dy)\nu(dz)\nu(dx)\quad\mbox{by }(iv),\\
\mathbbm{1}_H(x,y)\mathbbm{1}_{H^c}(x,z)R_z(dy)\nu(dz)\nu(dx)=\mathbbm{1}_H(x,y)\mathbbm{1}_{H^c}(x,z)f(z)\nu(dx)\nu(dy)\nu(dz)\quad\mbox{by }(iii),
\end{gather*}
\endgroup
and, by $(i)$ and $(iii)$,
\[\mathbbm{1}_H(x,y)\mathbbm{1}_{H^c}(x,z)R_z(dy)R_x(dz)\nu(dx)=\mathbbm{1}_H(x,y)\mathbbm{1}_{H^c}(x,z)f(x)f(z)\nu(dx)\nu(dy)\nu(dz).\]
Substituting for the above terms in \eqref{proof:general:balance:equation:i.i.d.2:prelim}, we obtain
\begingroup\allowdisplaybreaks
\begin{align*}
&\begin{aligned}\mathbbm{1}_H(x,y)\mathbbm{1}_{H^c}(x,z)\Bigl\{&\theta\bigl(f(z)-f(x)\bigr)R_x(dy)\nu(dx)\nu(dz)+\theta f(x)\bigl(f(z)-f(x)\bigr)\nu(dx)\nu(dy)\nu(dz)\\
&+f(x)\bigl(f(z)-f(x)\bigr)R_x(dy)\nu(dx)\nu(dz)+\theta f(x)^2\nu(dx)\nu(dy)\nu(dz)\\
&+f(x)\bigl(\theta+f(x)+f(z)\bigr)R_x(dy)\nu(dx)\nu(dz)\Bigr\}\end{aligned}\\
&=\mathbbm{1}_H(x,y)\mathbbm{1}_{H^c}(x,z)\Bigl\{\theta f(x)f(z)\nu(dx)\nu(dy)\nu(dz)+f(x)f(z)\bigl(\theta+2f(x)\bigr)\nu(dx)\nu(dy)\nu(dz)\Bigr\},
\end{align*}
\endgroup
which after simplification turns into
\begingroup\allowdisplaybreaks
\begin{align}\label{proof:general:balance:equation:i.i.d.2_pre}
\begin{aligned}
\mathbbm{1}_H(x,y)\mathbbm{1}_{H^c}(x,z)\Bigl\{f(z)\bigl(\theta&+2f(x)\bigr)R_x(dy)\nu(dx)\nu(dz)\Bigr\}\\
&=\mathbbm{1}_H(x,y)\mathbbm{1}_{H^c}(x,z)\Bigl\{f(x)f(z)\bigl(\theta+2f(x)\bigr)\nu(dx)\nu(dy)\nu(dz)\Bigr\}.
\end{aligned}
\end{align}
\endgroup
Dividing by $f(z)\bigl(\theta+2f(x)\bigr)$ and integrating $z$ out leaves
\begin{equation}\label{proof:general:balance:equation:i.i.d.2}
\mathbbm{1}_H(x,y)\nu(H_x^c)R_x(dy)\nu(dx)=\mathbbm{1}_H(x,y)\nu(H_x^c)f(x)\nu(dx)\nu(dy).
\end{equation}
%%% $x\mapsto\nu(H_x^c)=\int_\mathbb{X}\mathbbm{1}_H(x,y)\nu(dy)$ is $\mathcal{X}$-measurable by Fubini's theorem.

Let us define
\[D^*:=\{x\in\mathbb{X}:\nu(H_x^c)>0\}.\]
Suppose that $\nu(D^*)>0$. Take $y\in\mathbb{X}$. If $f(y)=f(x)$ for some $x\in D^*$, then $H_y^c=H_x^c$, and $\nu(H_y^c)=\nu(H_x^c)>0$; otherwise, if $f(y)\neq f(x)$ for all $x\in D^*$, then $D^*\subseteq H_y^c$ and $\nu(H_y^c)\geq\nu(D^*)>0$. In either case, $y\in D^*$, which implies that $\nu(D^*)=1$. As a result, we can divide both sides of \eqref{proof:general:balance:equation:i.i.d.2} by $\nu(H_x^c)$ to obtain
\[\mathbbm{1}_{H_x}(y)R_x(dy)\nu(dx)=\mathbbm{1}_{H_x}(y)f(x)\nu(dx)\nu(dy).\]
Summing this with \eqref{proof:general:balance:equation:i.i.d.1}, we get that, for $\nu$-a.e. $x$,
\[R_x(dy)=f(x)\nu(dy),\]
which implies from Proposition \ref{result:i.i.d} that $(X_n)_{n\geq1}$ is i.i.d.

If instead $\nu(D^*)=0$, then $\nu(H_x^c)=0$ for $\nu$-a.e. $x$. It follows for some fixed $x'$ with $\nu(H_{x'})=1$ that
\[f(x)=f(x')\qquad\mbox{for }\nu\mbox{-a.e. }x,\]
implying that the model is balanced. This concludes the proof of the theorem.
\end{proof}

\begin{remark}\label{result:general:balance_remark}
The proof of Theorem \ref{result:general:balance} largely remains the same even if we assume $R_x(\mathbb{X})=0$ for all $x$ in some $Z\in\mathcal{X}$ such that $0<\nu(Z)<1$. The only place of concern is in "dividing" \eqref{proof:general:balance:equation:i.i.d.2_pre} by $R_x(\mathbb{X})$, so by restricting \eqref{proof:general:balance:equation:i.i.d.2_pre} on $Z^c$ we get
\[\mathbbm{1}_H(x,y)\nu(H_x^c\cap Z^c)R_x(dy)\nu(dx)=\mathbbm{1}_H(x,y)\nu(H_x^c\cap Z^c)f(x)\nu(dx)\nu(dy).\]
Now, letting
\[D^*:=\{x\in\mathbb{X}:\nu(H_x^c\cap Z^c)>0\},\]
we obtain under $\nu(D^*\cap Z^c)>0$ that $(X_n)_{n\geq0}$ is i.i.d., using the same logic as above. If, on the other hand, $\nu(D^*\cap Z^c)=0$, then $\nu((D^*)^c)=\nu(Z^c)$, since $(D^*)^c\subseteq Z^c$, and so
\[(\nu\times\nu)\bigl(H^c\cap(Z^c\times Z^c)\bigr)=\int_{Z^c}\nu(H_x^c\cap Z^c)\nu(dx)=\int_{(D^*)^c}\nu(H_x^c\cap Z^c)\nu(dx)=0,\]
thus showing that $R_x(\mathbb{X})=m>0$ for $\nu$-a.e. $x$ in $Z^c$ and some $m>0$, when $(X_n)_{n\geq1}$ is not i.i.d.
\end{remark}
% if $x\in Z$, then $H_x^c=Z^c$, so $\nu(H_x^c\cap Z^c)=\nu(Z^c)>0$ and $x\in D^*$; thus, $Z\subseteq D^*$
% Suppose $\nu(D^*\cap Z^c)>0$. Let $y\in\mathbb{Y}$. If $f(y)=f(x)$ for some $x\in D^*$, then $H_y=H_x$ and $y\in D^*$; if $f(y)\neq f(x) for all $x\in D^*$, then $D^*\subseteq H_y^c$ and $\nu(H_y^c\cap Z^c)\geq\nu(D^*\cap Z^c)>0$. Therefore, $\nu(D^*)=1$, so as before, $R_x(dy)=f(x)\nu(dy)$ for $\nu$-a.e. $x$.

\begin{proof}[Proof of Theorem \ref{result:general:main}]
If $(X_n)_{n\geq 1}$ is i.i.d., then Proposition \ref{result:i.i.d} implies \eqref{result:general:equation:form} with $\mathcal{G}=\{\emptyset,\mathbb{X}\}$. Suppose that $(X_n)_{n\geq1}$ is exchangeable, but not i.i.d. It follows from Theorem \ref{result:general:balance} that the model is balanced, so we may assume, without loss of generality, that $R_x(\mathbb{X})=1$, for $x\in\mathbb{X}$; otherwise, we can normalize $\theta$ and $R$ (see Remark \ref{result:general:balance:remark}) and proceed from there. In this case, \eqref{proof:general:balance:condition1a} reduces to
\begin{equation}\label{proof:general:representation:identity1}
R_x(dy)\nu(dx)=R_y(dx)\nu(dy),
\end{equation}
and \eqref{proof:general:balance:condition2a} implies, taking the indefinite integral of the terms with respect to $\nu(dx)$, that
\begin{equation}\label{proof:general:representation:identity2}
R_y(dz)R_x(dy)\nu(dx)=R_z(dy)R_x(dz)\nu(dx).
\end{equation}
Applying \eqref{proof:general:representation:identity1} and \eqref{proof:general:representation:identity2} repeatedly, we obtain that
\begingroup\allowdisplaybreaks
\begin{align*}
R_y(dz)R_x(dy)\nu(dx)&=R_z(dy)R_x(dz)\nu(dx)=R_z(dy)R_z(dx)\nu(dz)\\
&=R_z(dx)R_y(dz)\nu(dy)=R_x(dz)R_y(dx)\nu(dy)=R_x(dz)R_x(dy)\nu(dx).
\end{align*}
\endgroup
As $\mathcal{X}$ is countably generated, there exists $C_0\in\mathcal{X}$ such that $\nu(C_0)=1$ and, for all $x\in C_0$,
\begin{equation}\label{proof:general:representation:identity3}
R_y(dz)R_x(dy)=R_x(dz)R_x(dy).
\end{equation}

Define $C_1:=\{x\in C_0:R_x(C_0)=1\}$ and $C_n:=\{x\in C_{n-1}:R_x(C_{n-1})=1\}$ for every $n\geq2$. As $R_x(\mathbb{X})=1$, we get from \eqref{proof:general:representation:identity1} that
\[1=\int_{C_0}R_x(\mathbb{X})\nu(dx)=\int_\mathbb{X}R_x(C_0)\nu(dx)=1-\int_{\mathbb{X}}(1-R_x(C_0))\nu(dx)=1-\int_{C_1^c}(1-R_x(C_0))\nu(dx).\]
But $R_x(C_0)<1$ for $x\in C_1^c$, so $\nu(C_1^c)=0$, otherwise the term on the right-hand side of the equation becomes strictly less than $1$. Proceeding by induction, we get $\nu(C_n)=1$ for all $n\geq1$; thus, letting $C^*=\bigcap_{n=1}^\infty C_n$, it holds $\nu(C^*)=1$ and $R_x(C_n)=1$, for all $x\in C^*$ and $n\geq1$. As a result, for every $x\in C^*$, we have that $R_x(C^*)=1$ and, since $C^*\subseteq C_0$, from \eqref{proof:general:representation:identity3},
\begin{equation}\label{proof:general:equality}
R_x(dz)=R_y(dz)\qquad\mbox{for }R_x\mbox{-a.e. }y.
\end{equation}

Let us define
\[\mathcal{G}:=\{A\in\mathcal{X}:R_x(A)=\delta_x(A)\mbox{ for all }x\in C^*\}.\]
%%% $(i)$ $R_x(\mathbb{X})=1=\delta_x(\mathbb{X})$, so $\mathbb{X}\in\mathcal{G}$; $(ii)$ let $A\in\mathcal{G}$. Then $R_x(A^c)=1-R_x(A)=1-\delta_x(A)=\delta_x(A^c)$, so $A^c\in\mathcal{G}$; $(iii)$ let $A_1,A_2,\ldots\in\mathcal{G}$ and $x\in C^*$. If $x\in\bigup_{n=1}^\infty A_n$, say $x\in A_m$, then $R_x(\bigup_{n=1}^\infty A_n)\geq R_x(A_m)=\delta_x(A_m)=1$, so $R_x(\bigup_{n=1}^\infty A_n)=1=\delta_x(\bigup_{n=1}^\infty A_n)$. If $x\notin\bigup_{n=1}^\infty A_n$, then $R_x(\bigup_{n=1}^\infty A_n)\leq\sumINF R_x(A_n)=\sumINF\delta_x(A_n)=0$, so $R_x(\bigup_{n=1}^\infty A_n)=0=\delta_x(\bigup_{n=1}^\infty A_n)$.
It is not hard to see that $\mathcal{G}$ is a $\sigma$-algebra. Next, fix $t\in[0,1]$ and $B\in\mathcal{X}$. Denote
\[E_{t,B}=\{x\in\mathbb{X}:R_x(B)<t\}.\]
Let $x\in C^*$. If $x\in E_{t,B}$, then $R_y(B)=R_x(B)<t$ for $R_x$-a.e. $y$ by \eqref{proof:general:equality}, so $R_x(E_{t,B})=1$. If instead $x\in E_{t,B}^c$, then again $R_y(B)=R_x(B)\geq t$ for $R_x$-a.e. $y$ by \eqref{proof:general:equality}, and so $R_x(E_{t,B})=0$. As a result, $R_x(E_{t,B})=\delta_x(E_{t,B})$, which implies that $E_{t,B}\in\mathcal{G}$; thus, $x\mapsto R_x(B)$ is $\mathcal{G}$-measurable.

Let $A\in\mathcal{G}$ and $B\in\mathcal{X}$. It follows from \eqref{proof:general:representation:identity1} and $\nu(C^*)=1$ that
\[\int_AR_x(B)\nu(dx)=\int_BR_x(A)\nu(dx)=\int_{B\cap C^*}R_x(A)\nu(dx)=\int_B\delta_x(A)\nu(dx)=\nu(A\cap B).\]
Therefore, $R$ is a regular version of $\nu(\cdot\mid\mathcal{G})$. Moreover, by construction,
\[R_x(A)=\delta_x(A)\qquad\mbox{for all }A\in\mathcal{G}\mbox{ and }x\in C^*;\]
thus, $R$ is a.e. proper and $\mathcal{G}$ is c.g. under $\nu$ (see \cite[p.649]{berti2007} and \cite[Theorem 1]{blackwell1975}).
%Let us define $\sigma(R):=\{x\mapsto R_x(B),B\in\mathcal{X}\}$. Then $\sigma(R)\subseteq\mathcal{G}$ and $\sigma(R)$ is countably generated, since $\mathcal{X}$ is countably generated. It follows for every $A\in\mathcal{G}$ that
%\[A\cap C^*=\{\delta(A)=1\}\cap C^*=\{R(A)=1\}\cap C^*\in\sigma(R)\cap C^*.\] 
%Therefore, $\mathcal{G}\cap C^*\subseteq\sigma(R)\cap C^*\subseteq\mathcal{G}\cap C^*$, implying that $\mathcal{G}\cap C^*$ is countably generated.
\end{proof}

%\begin{proof}[\textbf{Proof of \cref{result:balance:conditional_distribution}}]Let $R$ be as described. Put $\theta^*=\frac{\theta}{m}$ and $R^*=\frac{R}{m}$. Then $R^*$ is a regular version of $\nu(\cdot\mid\mathcal{G})$. It follows from \eqref{model:predictive} that $(X_n)_{n\geq1}$ is also an MVPS with parameters $(\theta^*,\nu,R^*)$, so Lemma 6 and Theorem 7 in \cite{berti2023} imply that $(X_n)_{n\geq1}$ is exchangeable.\end{proof}~ 

\begin{proof}[Proof of Theorem \ref{result:abs_cont:main}]
If $(X_n)_{n\geq1}$ is i.i.d., then Proposition \ref{result:i.i.d} implies \eqref{result:abs_cont:equation:form} with respect to the partition $\{\mathbb{X},\emptyset\}$, and vice versa. Suppose that $(X_n)_{n\geq0}$ is exchangeable, but not i.i.d. Assume, w.l.o.g., that $R_x(\mathbb{X})=1$ for all $x\in\mathbb{X}$. If $R_x$ is of the form \eqref{result:abs_cont:equation:form}, then it is obvious that $R_x\ll\nu$ for $\nu$-a.e. $x$.

Conversely, suppose that $R_x\ll\nu$ for $\nu$-a.e. $x$. Then $R_x(dy)\nu(dx)\ll\nu(dx)\nu(dy)$, so there exists a jointly measurable function $r:\mathbb{X}^2\rightarrow\mathbb{R}_+$ such that
\[R_x(dy)\nu(dx)=r(x,y)\nu(dx)\nu(dy),\]
%%% There exists $r_x\in M_+(\mathbb{X})$ s.t. $R_x(B)=\int_Br_x(y)\nu(dy)$ for $\nu$-a.e. $x$. Let $(\nu\times\nu)(N)=0$. Then $\int_\mathbb{X}\nu(N_x)\nu(dx)=0$, so $\nu(N_x)=0$ for $\nu$-a.e. x, and thus $\int_NR_x(dy)\nu(dx)=\int_\mathbb{X}\int_{N_x}r_x(y)\nu(dy)\nu(dx)=0$.
%%% See also Dellacherie and Meyer (1982), p. 52
and, since $\mathcal{X}$ is countably generated, as measures,
\begin{equation}\label{proof:abs_cont:density}
R_x(dy)=r(x,y)\nu(dy)\qquad\mbox{for }\nu\mbox{-a.e. }x.
\end{equation}

Denote by
\[D:=\bigl\{(x,y)\in\mathbb{X}^2:r(x,y)>0\bigr\},\]
and, letting $D_x$ be the $x$-section of $D$ for $x\in\mathbb{X}$, define
%%% $\nu(D_x)$ is a measurable function in $x$ by Fubini's theorem
\[G:=\{x\in\mathbb{X}:\nu(D_x)>0\}.\]
Then $R_x(D_x)=\int_{D_x}r(x,y)\nu(dy)=1$ for $\nu$-a.e. $x$. Moreover,
\[1=\int_\mathbb{X}R_x(D_x)\nu(dx)=\int_G\int_{D_x}r(x,y)\nu(dy)\nu(dx)+\int_{G^c}\int_{D_x}r(x,y)\nu(dy)\nu(dx)=\nu(G).\]
On the other hand, by \eqref{proof:general:representation:identity1}, \eqref{proof:general:representation:identity2} and \eqref{proof:general:representation:identity3}, we have the following identities
\begingroup\allowdisplaybreaks
\begin{align}
r(x,y)\nu(dx)\nu(dy) & = r(y,x)\nu(dx)\nu(dy),\label{result:abs_cont:identity1}\\
r(x,y)r(y,z)\nu(dy)\nu(dz)) & = r(x,z)r(z,y)\nu(dy)\nu(dz)\qquad\mbox{for }\nu\mbox{-a.e. }x,\label{result:abs_cont:identity2}\\
r(x,y)r(y,z)\nu(dx)\nu(dy)\nu(dz)) & = r(x,y)r(x,z)\nu(dx)\nu(dy)\nu(dz).\label{result:abs_cont:identity3}
\end{align}
\endgroup
%%%% $\int_{A\times B}f(x,y)\nu^2(dx,dy)=\int_{A\times B}g(x,y)\nu^2(dx,dy)$ for all $A,B\in\mathcal{X}$ implies that $\int_Cf(x,y)\nu^2(dx,dy)=\int_Cg(x,y)\nu^2(dx,dy)$ for every $C\in\mathcal{X}^2$ since $\{A\times B,A,B\in\mathcal{X}\}$ is a $\pi$-class generating $\mathcal{X}^2$.
In particular, equations \eqref{result:abs_cont:identity1} and \eqref{result:abs_cont:identity2} imply that
\begingroup\allowdisplaybreaks
\begin{align}
\mathbbm{1}_{D_x}(y)\mathbbm{1}_{D_x}(z)r(x,y)\nu(dx)\nu(dy)\nu(dz)&\overset{(a)}{=}\mathbbm{1}_{D_x}(y)\mathbbm{1}_{D_x}(z)\mathbbm{1}_{D_y}(z)r(x,y)\nu(dx)\nu(dy)\nu(dz) \nonumber\\
&=\mathbbm{1}_{D_x}(y)\mathbbm{1}_{D_x}(z)\mathbbm{1}_{D_y}(z)r(x,y)\frac{r(y,z)}{r(y,z)}\nu(dx)\nu(dy)\nu(dz) \nonumber\\
&=\mathbbm{1}_{D_x}(y)\mathbbm{1}_{D_x}(z)\mathbbm{1}_{D_y}(z)r(x,y)\frac{r(z,y)}{r(y,z)}\nu(dx)\nu(dy)\nu(dz) \nonumber\\
&=\mathbbm{1}_{D_x}(y)\mathbbm{1}_{D_x}(z)\mathbbm{1}_{D_y}(z)r(x,z)\nu(dx)\nu(dy)\nu(dz) \nonumber\\
&\overset{(b)}{=}\mathbbm{1}_{D_x}(y)\mathbbm{1}_{D_x}(z)r(x,z)\nu(dx)\nu(dy)\nu(dz), \label{proof:abs_cont:equation1}
\end{align}
\endgroup
where $(a)$ and $(b)$ follow from the fact that, for $(\nu\times\nu\times\nu)$-a.e. $(x,y,z)$ in $\mathbb{X}^3$, if $y\in D_x$ and $z\in D_x$, then $r(x,y)r(x,z)>0$, and so $r(x,y)r(y,z)>0$ by \eqref{result:abs_cont:identity3}, which implies that $r(y,z)>0$ and, as a result, $\mathbbm{1}_{D_x}(y)\mathbbm{1}_{D_x}(z)=\mathbbm{1}_{D_x}(y)\mathbbm{1}_{D_x}(z)\mathbbm{1}_{D_y}(z)$.

Let $A,B\in\mathcal{X}$. Since $\nu(G)=1$,
%%% \footnote{Technically, indicators such as $\mathbbm{1}_{D_x}(y)$ must be jointly measurable in $(x,y)$, which follows since $\mathbbm{1}_{D_x}(y)=\mathbbm{1}_{D_0}(x,y)$, where $D_0=\{(x,y)\in\mathbb{X}^2:r(x,y)>0\}$.}
\begingroup\allowdisplaybreaks
\begin{align}
\int_AR_x(B)\nu(dx)&=\int_A\int_B\mathbbm{1}_{D_x}(y)\cdot r(x,y)\nu(dy)\nu(dx) \nonumber\\
&=\int_A\mathbbm{1}_G(x)\biggl(\int_B\mathbbm{1}_{D_x}(y)\cdot r(x,y)\frac{1}{\nu(D_x)}\Bigl(\int_\mathbb{X}\mathbbm{1}_{D_x}(z)\nu(dz)\Bigr)\nu(dy)\biggr)\nu(dx) \nonumber\\
&\overset{(a)}{=}\int_A\mathbbm{1}_G(x)\biggl(\int_B\frac{1}{\nu(D_x)}\mathbbm{1}_{D_x}(y)\Bigl(\int_\mathbb{X}\mathbbm{1}_{D_x}(z)\cdot r(x,z)\nu(dz)\Bigr)\nu(dy)\biggr)\nu(dx) \nonumber\\
&=\int_A\mathbbm{1}_G(x)\biggl(\int_B\frac{1}{\nu(D_x)}\mathbbm{1}_{D_x}(y)\nu(dy)\biggr)\nu(dx) \nonumber\\
&=\int_A\nu(B|D_x)\nu(dx),\label{proof:abs_cont:conditional_final}
\end{align}
\endgroup
where $(a)$ follows from \eqref{proof:abs_cont:equation1}.
%%% and we can set $\nu(B|D_x)=0$ for $x\in G^c$. 

Given \eqref{proof:abs_cont:conditional_final}, the proof of Theorem \ref{result:abs_cont:main} would be complete if we can find a countable partition among the sets $D_x$. As we demonstrate next, this is possible, provided we intersect each $D_x$ with some a.s. sets. 

First, observe
\begingroup\allowdisplaybreaks
\begin{align}
1=\int_\mathbb{X}\int_\mathbb{X}R_y(\mathbb{X})R_x(dy)\nu(dx)&=\int_\mathbb{X}\int_{D_x}\Bigl(\int_{D_y}r(y,z)\nu(dz)\Bigr)r(x,y)\nu(dy)\nu(dx) \nonumber\\
&=\int_\mathbb{X}\int_{D_x}\Bigl(\int_{D_x\cap D_y}r(y,z)\nu(dz)\Bigr)r(x,y)\nu(dy)\nu(dx),\label{proof:abs_cont:partition:identity1}
\end{align}
\endgroup
where the last equality follows, similarly to \eqref{proof:abs_cont:equation1}, from the fact that $\mathbbm{1}_{D_x}(y)\mathbbm{1}_{D_y}(z)=\mathbbm{1}_{D_x}(y)\mathbbm{1}_{D_x}(z)\mathbbm{1}_{D_y}(z)$ for $(\nu\times\nu\times\nu)$-a.e. $(x,y,z)$ in $\mathbb{X}^3$. Since $\int_{D_x}r(x,y)\nu(dy)=1$ for $\nu$-a.e. $x$, the integrand in \eqref{proof:abs_cont:partition:identity1} w.r.t. $\nu(dx)$ is smaller or equal to $1$, so that
\[\int_{D_x}r(x,y)\Bigl(\int_{D_x\cap D_y}r(y,z)\nu(dz)\Bigr)\nu(dx)=1\qquad\mbox{for }\nu\mbox{-a.e. }x,\]
which itself implies that, for $\nu$-a.e. $y$ in $D_x$,
\[\int_{D_x\cap D_y}r(y,z)\nu(dz)=1.\]
But $\int_{D_y}r(y,z)\nu(dz)=1$, thus $\nu(D_x^c\cap D_y)=0$ for $\nu$-a.e. $y$ in $D_x$. On the other hand, 
\begingroup\allowdisplaybreaks
\begin{align}
\begin{aligned}\label{proof:abs_cont:partition:identity2}
0&=\int_\mathbb{X}\int_{D_x}\Bigl(\int_{D_y^c}r(y,z)\nu(dz)\Bigr)r(x,y)\nu(dy)\nu(dx)\\
&\overset{(a)}{=}\int_\mathbb{X}\int_{D_x}\Bigl(\int_{D_y^c}r(x,z)\nu(dz)\Bigr)r(x,y)\nu(dy)\nu(dx)\overset{(b)}{=}\int_\mathbb{X}\int_{D_x}\Bigl(\int_{D_x\cap D_y^c}r(x,z)\nu(dz)\Bigr)r(x,y)\nu(dy)\nu(dx),
\end{aligned}
\end{align}
\endgroup
where $(a)$ comes from \eqref{result:abs_cont:identity3}, and $(b)$ since $r(x,z)=0$ for $z\in D_x^c$. Applying the same reasoning as in \eqref{proof:abs_cont:partition:identity1} to \eqref{proof:abs_cont:partition:identity2}, we can show that, for $\nu$-a.e. $x$, it holds $\nu(D_x\cap D_y^c)=0$ for $\nu$-a.e. $y$ in $D_x$. 
%%% For \eqref{proof:abs_cont:partition:identity1}, as the integrand is $\leq 1$, for $\nu$-a.e. $x$, $\int_{D_x}r(x,y)\Bigl(\int_{D_x\cap D_y}r(y,z)\nu(dz)\Bigr)\nu(dx)=1$, so $\exists G_x\in\mathcal{X}:\nu(G_x)=1$ and $\forall y\in G_x\cap D_x$, $\int_{D_x\cap D_y}r(y,z)\nu(dz)=1=\int_{D_y}r(y,z)\nu(dz)$, so $\nu(D_x^c\cap D_y)=0$.
%%% For \eqrefproof:abs_cont:partition:identity2}, as the integrand $\geq 0$, for $\nu$-a.e. x, $\int_{D_x}r(x,y)\Bigl(\int_{D_x\cap D_y^c}r(x,z)\nu(dz)\Bigr)\nu(dx)=0$, so $\exists G_x\in\mathcal{X}:\nu(G_x)=1$ and $\forall y\in G_x\cap D_x$, $\int_{D_x\cap D_y^c}r(x,z)\nu(dz)=0=1-\int_{D_x}r(x,z)\nu(dz)=1$, so $\nu(D_x\cap D_y^c)=0$.

It follows from both results that, for some $E\in\mathcal{X}:\nu(E)=1$ and every $x\in E$, there exists a further set $E_x\in\mathcal{X}$ such that $\nu(E_x)=1$ and, for all $y\in E_x\cap D_x$, it holds
\[\nu(D_x^c\cap D_y)=0\qquad\mbox{and}\qquad\nu(D_x\cap D_y^c)=0,\]
implying that
\begin{equation}\label{proof:abs_cont:partition:equivalence}
\nu(D_x)=\nu(D_x\cap D_y)=\nu(D_y).
\end{equation}

Recall that $G=\{x\in\mathbb{X}:\nu(D_x)>0\}$ and $\nu(G)=1$. Let us define, for $x\in E$,
\[\bar{D}_x:=E_x\cap D_x\qquad\mbox{and}\qquad\mathbb{X}_0:=G\cap E\cap\{x\in\mathbb{X}:R_x(dy)=r(x,y)\nu(dy)\}.\]
Then $\nu(\bar{D}_x)=\nu(D_x)>0$ for all $x\in\mathbb{X}_0$, and $\nu(\mathbb{X}_0)=1$. We will now show that the sets $\bar{D}_x$ and $\bar{D}_y$, for different $x,y\in\mathbb{X}_0$, are either disjoint or their difference has $\nu$-measure zero, which will allow us to create the aforementioned partition on $\mathbb{X}$.

Let $x,y\in\mathbb{X}_0$ be such that $\bar{D}_x\cap\bar{D}_y\neq\emptyset$. Pick $z\in\bar{D}_x\cap\bar{D}_y$. Then $z\in E_x$ and $z\in E_y$, so $\nu(D_x\cap D_z^c)=0$ and $\nu(D_y^c\cap D_z)=0$ from \eqref{proof:abs_cont:partition:equivalence}, which implies that
\[\nu(\bar{D}_x)=\nu(D_x)=\nu(D_x\cap D_y)+\nu(D_x\cap D_y^c\cap D_z)+\nu(D_x\cap D_y^c\cap D_z^c)=\nu(D_x\cap D_y)=\nu(\bar{D}_x\cap\bar{D}_y),\]
and, similarly, 
\begin{equation}\label{proof:abs_cont:partition_difference}
\nu(\bar{D}_y)=\nu(\bar{D}_x\cap\bar{D}_y)=\nu(\bar{D}_x).
\end{equation}
If there exists, in addition, $y'\in\mathbb{X}_0$ such that $\bar{D}_y\cap\bar{D}_{y'}\neq\emptyset$, then $\nu(\bar{D}_{y'})=\nu(\bar{D}_y\cap\bar{D}_{y'})$ from \eqref{proof:abs_cont:partition_difference} and
\[\nu(\bar{D}_x\cap\bar{D}_{y'})=\nu(\bar{D}_x\cap\bar{D}_y\cap\bar{D}_{y'})=\nu(\bar{D}_x\cap\bar{D}_y\cap\bar{D}_{y'})+\nu(\bar{D}_x^c\cap\bar{D}_y\cap\bar{D}_{y'})=\nu(\bar{D}_y\cap\bar{D}_{y'})=\nu(\bar{D}_{y'})>0;\]
thus, $\bar{D}_x\cap\bar{D}_{y'}\neq\emptyset$. Therefore, we can partition $\mathbb{X}_0$ into equivalent classes according to whether $\bar{D}_x\cap\bar{D}_y\neq\emptyset$ or not, and we can pick one element from each class to create a family of subsets $\{\bar{D}_x\}_{x\in\mathbb{Y}}$, for some $\mathbb{Y}\subseteq\mathbb{X}_0$, such that $\bar{D}_x\cap \bar{D}_y=\emptyset$, for $x\neq y$, and $\nu(\bar{D}_x)>0$, for all $x\in\mathbb{Y}$. In fact, $\mathbb{Y}$ is at most countable as $\nu$ is finite.

If $x\in\mathbb{X}_0$, then $R_x\ll\nu$, and thus $R_x(\bar{D}_x)=R_x(D_x)=1$; hence we get from \eqref{proof:abs_cont:partition_difference} that, for every $x,y\in\mathbb{X}_0$,
\[
R_x(\bar{D}_y)=R_x(\bar{D}_x\cap\bar{D}_y)=R_x(\bar{D}_x)=1,\qquad\mbox{if }\bar{D}_x\cap\bar{D}_y\neq\emptyset,\]
and
\[R_x(\bar{D}_y)=R_x(\bar{D}_x^c\cap\bar{D}_y)=0,\qquad\mbox{if }\bar{D}_x\cap\bar{D}_y=\emptyset.\]
The two equations together imply that $\sum_{x'\in\mathbb{Y}}R_x(\bar{D}_{x'})=1$ for each $x\in\mathbb{X}_0$, so using \eqref{proof:abs_cont:partition:identity2} and the fact that $\{\bar{D}_{x'}\}_{x'\in\mathbb{Y}}$ are disjoint, 
\[\nu\Bigl(\bigcup_{x'\in\mathbb{Y}}\bar{D}_{x'}\Bigr)=\sum_{x'\in\mathbb{Y}}\nu(\bar{D}_{x'})=\sum_{x'\in\mathbb{Y}}\int_\mathbb{X}R_x(\bar{D}_{x'})\nu(dx)=\int_{\mathbb{X}_0}\sum_{x'\in\mathbb{Y}}R_x(\bar{D}_{x'})\nu(dx)=\nu(\mathbb{X}_0)=1.\]

Let $A,B\in\mathcal{X}$. Then
\begingroup\allowdisplaybreaks
\begin{align*}
\int_AR_x(B)\nu(dx)&=\int_{A\cap(\bigcup_{x'\in\mathbb{Y}}\bar{D}_{x'})}R_x(B)\nu(dx)\\
&=\sum_{x'\in\mathbb{Y}}\int_{A\cap\mathbb{X}_0\cap\bar{D}_{x'}}R_x(B)\nu(dx)\\
&\overset{(a)}{=}\sum_{x'\in\mathbb{Y}}\int_{A\cap\mathbb{X}_0\cap\bar{D}_{x'}}\nu(B|D_x)\nu(dx)\\
&\overset{(b)}{=}\sum_{x'\in\mathbb{Y}}\int_{A\cap\mathbb{X}_0\cap\bar{D}_{x'}}\nu(B|\bar{D}_{x'})\nu(dx)=\int_A\sum_{x'\in\mathbb{Y}}\nu(B|\bar{D}_{x'})\cdot\mathbbm{1}_{\bar{D}_{x'}}(x)\nu(dx),
\end{align*}
\endgroup
where $(a)$ follows from \eqref{proof:abs_cont:conditional_final} with $A$ replaced by $A\cap\mathbb{X}_0\cap\bar{D}_{x'}$, and $(b)$ from \eqref{proof:abs_cont:partition:equivalence} since $\nu(\bar{D}_x^c\cap\bar{D}_{x'})=0=\nu(\bar{D}_x\cap\bar{D}_{x'}^c)$ whenever $x\in\bar{D}_{x'}\cap\mathbb{X}_0$. Therefore, $R_x(B)=\sum_{x'\in\mathbb{Y}}\nu(B|\bar{D}_{x'})\cdot\mathbbm{1}_{\bar{D}_{x'}}(x)$ for $\nu$-a.e. $x$ and, as $\mathcal{X}$ is countably generated, as measures,
\[R_x(\cdot)=\sum_{x'\in\mathbb{Y}}\nu(\cdot\mid\bar{D}_{x'})\cdot\mathbbm{1}_{\bar{D}_{x'}}(x)\qquad\mbox{for }\nu\mbox{-a.e. }x.\]
\end{proof}

%\begin{proof}[\textbf{Proof of \cref{result:abs_cont:limit}}]
%If $(X_n)_{n\geq1}$ is i.i.d., then the result holds trivially with $\tilde{P}=\nu$. If $(X_n)_{n\geq1}$ is exchangeable, but not i.i.d., then it is balanced by \cref{result:general:balance} and its reinforcement has the form \eqref{result:abs_cont:equation:form}. As a result, the conclusions of Theorems 9 and 10 and Example 15 in \cite{berti2023} apply.
%\end{proof}

\begin{proof}[Proof of Theorem \ref{result:decomposition:main}]
If $(X_n)_{n\geq1}$ is i.i.d., then Proposition \ref{result:i.i.d} implies that $R_x=R_x^a$, for $\nu$-a.e. $x$. Suppose that $(X_n)_{n\geq0}$ is exchangeable, but not i.i.d. Assume, without loss of generality, that $R_x(\mathbb{X})=1$ for $x\in\mathbb{X}$. By Lebesgue decomposition (Theorem 1 in \cite{lange1973}),
\[R_x=R_x^\perp+R_x^a,\]
for some finite transition kernels $R^\perp$ and $R^a$ on $\mathbb{X}$ such that $R_x^\perp\perp\nu$ for some $S_x\in\mathcal{X}$ with $\nu(S_x)=0=R_x^\perp(S_x^c)$, and $R_x^a\ll\nu$, for $x\in\mathbb{X}$. Moreover,
\begin{equation}\label{proof:general:decomposition:identity1}
R_x^\perp(\mathbb{X})=R_x^\perp(S_x)=R_x(S_x)\qquad\mbox{and}\qquad R_x^a(\mathbb{X})=R_x^a(S_x^c)=R_x(S_x^c).
\end{equation}
Arguing as in \eqref{proof:abs_cont:density}, there exists some measurable function $r:\mathbb{X}^2\rightarrow\mathbb{R}_+$ such that  
\begin{equation}\label{proof:general:decomposition:density}
R_x^a(dy)=r(x,y)\nu(dy)\qquad\mbox{for }\nu\mbox{-a.e. }x.
\end{equation}

In addition, we have from \eqref{proof:general:representation:identity1} that
\begin{equation}\label{proof:general:decomposition:condition1}
R_x^\perp(dy)\nu(dx)+R_x^a(dy)\nu(dx)=R_y^\perp(dx)\nu(dy)+R_y^a(dx)\nu(dy),
\end{equation}
and, from \eqref{proof:general:representation:identity2}, for $\nu$-a.e. $x$,
\begingroup\allowdisplaybreaks
\begin{equation}\label{proof:general:decomposition:condition2}
\begin{aligned}
R_y^\perp(dz)R_x^\perp(dy)&+R_y^a(dz)R_x^\perp(dy)+R_y^\perp(dz)R_x^a(dy)+R_y^a(dz)R_x^a(dy)\\
&=R_z^\perp(dy)R_x^\perp(dz)+R_z^a(dy)R_x^\perp(dz)+R_z^\perp(dy)R_x^a(dz)+R_z^a(dy)R_x^a(dz).
\end{aligned}
\end{equation}
\endgroup
Fix one such $x$. Since $\nu(S_x)=0$, then $R_y^a(S_x)=0$ for all $y\in\mathbb{X}$, and integrating \eqref{proof:general:decomposition:condition2} on $z\in S_x$ and $y\in\mathbb{X}$, we get
\begin{equation}\label{proof:general:decomposition:result}
\begin{aligned}
\int_\mathbb{X}R_y^\perp(S_x)R_x^\perp(dy)+\int_\mathbb{X}R_y^\perp(S_x)R_x^a(dy)&=\int_{S_x}R_z^\perp(\mathbb{X})R_x^\perp(dz)+\int_{S_x}R_z^a(\mathbb{X})R_x^\perp(dz)=R_x^\perp(\mathbb{X}),
\end{aligned}
\end{equation}
where we have used in the last equality that $R_x^\perp(S_x^c)=0$. On the other hand, from \eqref{proof:general:decomposition:density},
\begingroup\allowdisplaybreaks
\begin{align*}
\int_\mathbb{X}R_y^\perp(S_x)R_x^a(dy)&=\int_{S_x^c}R_y^\perp(S_x)R_x^a(dy)\\
&=\int_{y\in S_x^c}\int_{z\in S_x}r(x,y)R_y^\perp(dz)\nu(dy)\\
&\begin{aligned}\overset{(a)}{=}\int_{z\in S_x}\int_{y\in S_x^c}r(x,y)R_z^\perp(dy)\nu(dz)&+\int_{z\in S_x}\int_{y\in S_x^c}r(x,y)R_z^a(dy)\nu(dz)\\
&-\int_{y\in S_x^c}\int_{z\in S_x}r(x,y)R_y^a(dz)\nu(dy)\end{aligned}\\
&\overset{(b)}{=}0,
\end{align*}
\endgroup
%%% To apply the change of measures, the integrand is jointly measurable (but not in $x$, which is fixed). 
where $(a)$ follows from \eqref{proof:general:decomposition:condition1}, and for $(b)$ we have used again that $\nu(S_x)=0$ and $R_y^a(S_x)=0$ for all $y\in\mathbb{X}$.

Plugging this into \eqref{proof:general:decomposition:result}, we get $\int_\mathbb{X}R_y^\perp(S_x)R_x^\perp(dy)=R_x^\perp(\mathbb{X})$ for $\nu$-a.e. $x$, and thus
\begin{equation}\label{proof:decomposition:singular:support}
R_y^\perp(S_x)=1\qquad\mbox{for }R_x^\perp\mbox{-a.e. }y.
\end{equation}
Moreover, $1=R_y^\perp(S_x)\leq R_y^\perp(\mathbb{X})=R_y(S_y)$, so $R_y^a(\mathbb{X})=0$, by \eqref{proof:general:decomposition:identity1}, and $R_y^a(dz)R_x^\perp(dy)=0$ for $\nu$-a.e. $x$, which implies the splitting of the measures
\[R_y(dz)R_x(dy)\nu(dx)=R_y^\perp(dz)R_x^\perp(dy)\nu(dx)+R_y(dz)R_x^a(dy)\nu(dx).\]
On the other hand, using the identity in \eqref{proof:general:equality}, we obtain
\[R_y(dz)R_x(dy)\nu(dx)=R_x(dz)R_x(dy)\nu(dx)=R_x(dz)R_x^\perp(dy)\nu(dx)+R_x(dz)R_x^a(dy)\nu(dx).\]
Combining the two result, we get, for $\nu$-a.e. $x$,
\[R_y^\perp(dz)R_x^\perp(dy)+R_y(dz)R_x^a(dy)=R_x(dz)R_x^\perp(dy)+R_x(dz)R_x^a(dy).\]
But $R_x^\perp(dy)\perp R_x^a(dy)$ w.r.t. $S_x$ and $R_x^\perp(S_x)=1$, so it must be
\[R_y^\perp(dz)R_x^\perp(dy)=\mathbbm{1}_{S_x}(y)R_y^\perp(dz)R_x^\perp(dy)=\mathbbm{1}_{S_x}(y)R_x(dz)R_x^\perp(dy)=R_x(dz)R_x^\perp(dy).\]
It follows from this result, \eqref{proof:general:decomposition:identity1} and \eqref{proof:decomposition:singular:support} that, for $\nu$-a.e. $x$,
\[R_x(S_x)=R_x^\perp(S_x)=\int_{S_x}R_y^\perp(S_x)R_x^\perp(dy)=\int_{S_x}\int_{S_x}R_y^\perp(dz)R_x^\perp(dy)=\int_{S_x}R_x(S_x)R_x^\perp(dy)=R_x(S_x)R_x(S_x);\]
thus, $R_x(S_x)\in\{0,1\}$ for $\nu$-a.e. $x$.

Let us define
\[\mathcal{S}:=\{x\in\mathbb{X}:R_x^\perp(\mathbb{X})=1\},\]
which is $\mathcal{X}$-measurable since $R_x^\perp$ is a transition kernel. By \eqref{proof:general:decomposition:identity1}, we have $R_x(S_x)=R_x^\perp(\mathbb{X})=1$, for $x\in\mathcal{S}$. Therefore, using that $R_x(S_x)\in\{0,1\}$ for $\nu$-a.e. $x$, we get for $\nu$-a.e. $x$ that: if $x\in\mathcal{S}$, then $R_x=R_x^\perp$; if $x\in\mathcal{S}^c$, then $R_x=R_x^a$. Moreover, \eqref{proof:general:decomposition:identity1} and \eqref{proof:decomposition:singular:support} imply for $\nu$-a.e. $x$ that $y\in\mathcal{S}$ for $R_x^\perp$-a.e. $y$; in other words, $R_x^\perp(\mathcal{S}^c)=0$ for $\nu$-a.e. $x$. Finally, using \eqref{proof:general:representation:identity1} and the fact that $R_x^a=0$ for $\nu$-a.e. $x\in\mathcal{S}$ and $R_x^\perp=0$ for $\nu$-a.e. $x\in\mathcal{S}^c$, we get
\begingroup\allowdisplaybreaks
\begin{align*}
\int_\mathbb{X}R_x^a(\mathcal{S})\nu(dx)&=\int_\mathcal{S}R_x^a(\mathcal{S})\nu(dx)+\int_{\mathcal{S}^c}R_x^a(\mathcal{S})\nu(dx)=\int_{\mathcal{S}^c}R_x(\mathcal{S})\nu(dx)\\
&=\int_\mathcal{S}R_x(\mathcal{S}^c)\nu(dx)=\int_\mathcal{S}R_x^\perp(\mathcal{S}^c)\nu(dx)+\int_{\mathcal{S}^c}R_x^\perp(\mathcal{S}^c)\nu(dx)=\int_\mathbb{X}R_x^\perp(\mathcal{S}^c)\nu(dx)=0,
\end{align*}
\endgroup
and so $R_x^a(\mathcal{S})=0$ for $\nu$-a.e. $x$.
\end{proof}

\begin{proof}[Proof of Proposition \ref{result:singular:discrete:atoms}]
Since $R_x\perp\nu$, then $(X_n)_{n\geq1}$ is not i.i.d., and hence not unbalanced by Theorem \ref{result:general:balance}, so from Corollary \ref{result:general:balance_probability} we may assume that $R_x(\mathbb{X})=1$, without loss of generality.

Let us define, for $x\in\mathbb{X}$,
\[D_x:=\bigl\{y\in\mathbb{X}:R_x(\{y\})>0\bigr\}\qquad\mbox{and}\qquad D:=\{x\in\mathbb{X}:x\in D_x\}.\]
Then $R_x(\cdot)=\sum_{y\in D_x}p_x(y)\delta_y(\cdot)$ for some $p_x(y)>0$ such that $\sum_{y\in D_x}p_x(y)=1$.

It follows from \eqref{proof:general:equality} and the form of $R_x$ that there exists $C\in\mathcal{X}$ such that $\nu(C)=1$ and, for all $x\in C$,
\[R_x(dz)=R_y(dz)\qquad\mbox{for }y\in D_x;\]
thus, $D_x=D_y$ for all $y\in D_x$, which implies that $y\in D_y$, i.e. $y\in D$. Therefore, by \eqref{proof:general:representation:identity1}, 
\[\nu(D)=\int_\mathbb{X}R_x(D)\nu(dx)=\int_C\Bigl(\sum_{y\in D_x}p_x(y)\delta_y(D)\Bigr)\nu(dx)=\int_C\Bigl(\sum_{y\in D_x}p_x(y)\Bigr)\nu(dx)=1.\]

For the second part, let $x,y\in C\cap D$ be such that $y\notin D_x$. Assume $D_x\cap D_y\neq\emptyset$. Take $z\in D_x\cap D_y$. It follows from above that $D_x=D_z=D_y$. But $y\in D$, so $y\in D_y=D_x$, absurd. As a result, for $\nu$-a.e. $x$ and $y$, either $D_x=D_y$ or $D_x\cap D_y=\emptyset$.
\end{proof}

\section{Discussion}\label{section:discussion}

The results in this paper allow us to state some universal facts about exchangeable MVPSs that were previously unknown: 1) all models are necessarily balanced; 2) the normalized reinforcement kernels are a.e. proper regular conditional distributions; 3) the prior distributions have the stick-breaking construction of a Dirichlet process. When $\mathbb{X}$ is countable or the reinforcement is dominated by $\nu$, then every MVPS is a Dirichlet process mixture model over a family of probability distributions with disjoint supports derived from $\nu$. Relaxing parts of this structure while retaining exchangeability would lead to a very different sampling scheme from the one that \eqref{intro:predictive:mvps} entails. On the other hand, the fact that, for fixed $x$, $R_x$ is either absolutely continuous or mutually singular with respect to $\nu$, though surprising at first, seems very natural under exchangeability. Therefore, we expect that $R$ can be further decomposed along other lines, e.g., $R_x^\perp$ might be discrete only for some $x$ and diffuse for the rest.

\subsection*{Acknowledgments}

This study is financed by the European Union-NextGenerationEU, through the National Recovery and Resilience Plan of the Republic of Bulgaria, project No. BG-RRP-2.004-0008.

We are grateful to an Associate Editor and two anonymous referees for their valuable comments and helpful suggestions, and for pointing out inaccuracies in an earlier version of this work.

\bibliography{mybib}

\begin{thebibliography}{17}
\providecommand{\natexlab}[1]{#1}
\providecommand{\url}[1]{\texttt{#1}}
\expandafter\ifx\csname urlstyle\endcsname\relax
  \providecommand{\doi}[1]{doi: #1}\else
  \providecommand{\doi}{doi: \begingroup \urlstyle{rm}\Url}\fi

\bibitem[Aldous(1985)]{aldous1985}
D.~J. Aldous.
\newblock Exchangeability and related topics.
\newblock \emph{\'Ecole d'\'Et\'e de Probabilit\'es de Saint-Flour XIII 1983},
  1117:\penalty0 1--198, 1985.

\bibitem[Bandyopadhyay and Thacker(2022)]{thacker2022}
A.~Bandyopadhyay and D.~Thacker.
\newblock {A new approach to P\'{o}lya urn schemes and its infinite color
  generalization}.
\newblock \emph{Ann. Appl. Probab.}, 32\penalty0 (1):\penalty0 46--79, 2022.

\bibitem[Berti and Rigo(2007)]{berti2007}
P.~Berti and P.~Rigo.
\newblock 0--1 laws for regular conditional distributions.
\newblock \emph{Ann. Probab.}, 35\penalty0 (2):\penalty0 649--662, 2007.

\bibitem[Berti et~al.(2023)Berti, Dreassi, Leisen, Pratelli, and
  Rigo]{berti2023}
P.~Berti, E.~Dreassi, F.~Leisen, L.~Pratelli, and P.~Rigo.
\newblock {Kernel based Dirichlet sequences}.
\newblock \emph{Bernoulli}, 29\penalty0 (2):\penalty0 1321--1342, 2023.

\bibitem[Blackwell and Dubins(1975)]{blackwell1975}
D.~Blackwell and L.~E. Dubins.
\newblock On existence and non-existence of proper, regular, conditional
  distributions.
\newblock \emph{Ann. Probab.}, 3\penalty0 (5):\penalty0 741--752, 1975.

\bibitem[Blackwell and MacQueen(1973)]{blackwell1973}
D.~Blackwell and J.~B. MacQueen.
\newblock {Ferguson distributions via P\'{o}lya urn schemes}.
\newblock \emph{Ann. Stat.}, 1\penalty0 (2):\penalty0 353--355, 1973.

\bibitem[Fortini and Petrone(2012)]{fortini2012petrone}
S.~Fortini and S.~Petrone.
\newblock {Predictive construction of priors in Bayesian nonparametrics}.
\newblock \emph{Braz. J. Probab. Stat.}, 26\penalty0 (4):\penalty0 423--449,
  2012.

\bibitem[Fortini et~al.(2000)Fortini, Ladelli, and Regazzini]{fortini2000}
S.~Fortini, L.~Ladelli, and E.~Regazzini.
\newblock Exchangeability, predictive distributions and parametric models.
\newblock \emph{Sankhy\={a} Ser. A}, 62\penalty0 (1):\penalty0 86--109, 2000.

\bibitem[Fortini et~al.(2021)Fortini, Petrone, and Sariev]{fortini2021}
S.~Fortini, S.~Petrone, and H.~Sariev.
\newblock {Predictive constructions based on measure-valued \Polya urn
  processes}.
\newblock \emph{Mathematics, \textnormal{special issue} ``Bayesian Predictive
  Inference and Related Asymptotics -- Festschrift for Eugenio Regazzini's 75th
  Birthday''}, 9:\penalty0 2845, 2021.

\bibitem[Janson(2019)]{janson2019}
S.~Janson.
\newblock {Random replacements in P\'{o}lya urns with infinitely many colours}.
\newblock \emph{Electron. Commun. Probab.}, 24, 2019.
\newblock paper no. 23, 11 pp.

\bibitem[Lange(1973)]{lange1973}
K.~Lange.
\newblock Decompositions of substochastic transition functions.
\newblock \emph{Proc. Amer. Math. Soc.}, 37\penalty0 (2):\penalty0 575--580,
  1973.

\bibitem[Lijoi and Pr{\"u}nster(2010)]{lijoi2010}
A.~Lijoi and I.~Pr{\"u}nster.
\newblock {Models beyond the Dirichlet process}.
\newblock In \emph{Bayesian Nonparametrics}, N. L. Hjort, C. Holmes, P.
  M{\"u}ller and S. G. Walker, eds., pages 80--136. Cambridge University Press,
  Cambridge, UK, 2010.

\bibitem[Mahmoud(2008)]{mahmoud2008}
H.~Mahmoud.
\newblock \emph{{P\'{o}lya Urn Models}}.
\newblock CRC Press, Boca Raton, FL, 2008.

\bibitem[Mailler and Marckert(2017)]{mailler2017}
C.~Mailler and J.-F. Marckert.
\newblock {Measure-valued P\'{o}lya urn processes}.
\newblock \emph{Electron. J. Probab.}, 22, 2017.
\newblock paper no. 26, 33 pp.

\bibitem[Mailler and Villemonais(2020)]{mailler2020}
C.~Mailler and D.~Villemonais.
\newblock {Stochastic approximation on non-compact measure spaces and
  application to measure-valued P\'{o}lya processes}.
\newblock \emph{Ann. Appl. Probab.}, 30\penalty0 (5):\penalty0 2393--2438,
  2020.

\bibitem[Pemantle(2007)]{pemantle2007}
R.~Pemantle.
\newblock A survey of random processes with reinforcement.
\newblock \emph{Probab. Surv.}, 4:\penalty0 1--79, 2007.

\bibitem[Sariev et~al.(2023)Sariev, Fortini, and Petrone]{sariev2023}
H.~Sariev, S.~Fortini, and S.~Petrone.
\newblock Infinite-color randomly reinforced urns with dominant colors.
\newblock \emph{Bernoulli}, 29\penalty0 (1):\penalty0 132--152, 2023.

\end{thebibliography}

\end{document}